\DeclareMathAlphabet{\pazocal}{OMS}{zplm}{m}{n}
\newtheorem{theorem}{\bf Theorem}
\newtheorem{proposition}{\bf Proposition}
\newtheorem{lemma}{\bf Lemma}
\begin{document}
\title{Resilient Critical Infrastructure: Bayesian Network Analysis and Contract-Based Optimization}   

\author{\IEEEauthorblockN{AbdelRahman Eldosouky$^1$, Walid Saad$^1$, and Narayan Mandayam$^2$}\\ 
\IEEEauthorblockA{\small $^1$ Wireless@VT, Bradley Department of Electrical and Computer Engineering, Virginia Tech, Blacksburg, VA, USA, \\Emails:\{iv727,walids\}@vt.edu\\
$^2$ Electrical and Computer Engineering Department , Rutgers University, New Brunswick, NJ, USA, Email: narayan@winlab.rutgers.edu\\
}
\thanks{This research was supported by the US National Science Foundation under Grants ACI-1541105 and ACI-1541069.} \vspace{-0.9cm}
}
\date{}
\maketitle

\begin{abstract}
Instilling resilience in critical infrastructure (CI) such as dams or power grids is a major challenge for tomorrow's cities and communities. Resilience, here, pertains to a CI's ability to adapt or rapidly recover from disruptive events.
In this paper, the problem of optimizing and managing the resilience of CIs is studied.
In particular, a comprehensive two-fold framework is proposed to improve CI resilience by considering both the individual CIs and their collective contribution to an entire system of multiple CIs. To this end, a novel analytical resilience index is proposed to measure the effect of each CI's physical components on its probability of failure.
In particular, a Markov chain defining each CI's performance state and a Bayesian network modeling the probability of failure are introduced to infer each CI's resilience index.
Then, to maximize the resilience of a system of CIs, a novel approach for allocating resources, such as drones or maintenance personnel, is proposed.
In particular, a comprehensive resource allocation framework, based on the tools of contract theory, is proposed enabling the system operator to optimally allocate resources, such as, redundant components or monitoring devices to each individual CI based on its economic contribution to the entire system.
The optimal solution of the contract-based resilience resource allocation problem is analytically derived using dynamic programming.
The proposed framework is then evaluated using a case study pertaining to hydropower dams and their interdependence to the power grid.
Simulation results, within the case study, show that the system operator can economically benefit from allocating the resources while dams have a $60\%$ average improvement over their initial resilience indices.
\end{abstract}
\vspace{0.2cm}

\begin{IEEEkeywords}
Critical infrastructure, resilience, Bayesian networks, contract theory , dynamic programming.
\end{IEEEkeywords}

\section{Introduction}\vspace{-0.1cm}
\IEEEPARstart{C}{ritical} infrastructure (CI), such as power grids and transportation systems, are vital to modern day cities and communities~\cite{ICIP28}. As such, maintaining proper operation of CIs, in presence of failures or security threats, is therefore a critical challenge.
In particular, \emph{reliability and resilience} are two key measures that can be used to evaluate the functionality and the ability of an infrastructure to deliver its designated service, under potentially disruptive situations.
In practice, there is a significant difference between reliability and resilience. Reliability is a term that describes the frequency or the likelihood of a CI's failure~\cite{ICIP13}. Resilience, on the other hand, has multiple definitions that are typically application-dependent~\cite{ICIP28}. Most of these definitions pertain to resilience in response to a change in or a corruption to the system's normal functionality.
A general definition of resilience, given by the Department of Homeland Security (DHS) advisory council, is the ability of an infrastructure to adapt to or rapidly recover from a potentially disruptive event~\cite{ICIP19}.

The importance of studying reliability and resilience for CIs stems from the fact that they are prone to many disruptive events such as natural disasters, hazardous conditions, subversive attacks, aging, or even inadequate maintenance~\cite{ICIP37}.
Thus, it is crucial for CIs to operate reliably and to be resilient in face of potential failures and disruptions.
Given that CIs cut across multiple domains that include communications, dams, power grids, transportation systems, and water systems~\cite{CIP01}, and that the resilience lacks a standard definition, resilience improvement techniques are typically infrastructure-specific, for instance ~\cite{ICIP13} considered the resilience of water systems,~\cite{ICIP42} proposed a framework to improve the resilience of the power grid, and~\cite{ICIP40} considered the resilience of petrochemical CIs.
This poses many challenges for assessing and developing resilience improvement techniques for different-type interdependent CIs.
A general framework is therefore needed to evaluate the resilience of different CIs and to help in designing general resilience improvement techniques.
Some studies in the literature, e.g., ~\cite{ICIP31,ICIP39,ICIP34,ICIP20}, proposed general resilience frameworks for CIs.
However, the approaches proposed in this prior art mostly evaluate the CI resilience based on satisfying a number of pre-determined criteria as detailed in the next section.
The resilience measures based on these properties fail to capture the effect of different disruptive events on the CI. In contrast, here, our goal is to introduce a general framework to evaluate and improve CI resilience based on the effect of disruptive events on the CI's components.
Prior to providing our key contributions, we will first review existing related frameworks and techniques in the next section to pinpoint their limitations.

\subsection{Related Work}
Critical Infrastructure resilience has recently attracted significant attention~\cite{ICIP31,ICIP39,ICIP34,ICIP20,ICIP41}.
In~\cite{ICIP31} the authors considered four properties for resilience: robustness, redundancy, resourcefulness, and rapidity and the resilience was quantified using four interrelated dimensions: technical (physical), organizational, social, and economic.
The authors in~\cite{ICIP39} proposed a resilience framework that seeks to achieve three resilience properties pertaining to the ability of a system to absorb the impacts of perturbations, adapt to undesirable situations, and quickly return to its normal operations.
In~\cite{ICIP34}, a three-stage framework, reflecting the infrastructure's resistant, absorptive, and restorative capacities, is introduced to analyze the resilience.
The DHS work in~\cite{ICIP20} developed the notion of a resilience measurement index (RMI) which is an indicator to determine the degree to which the elements pertaining to resilience have been implemented by a CI. These elements include the preparedness of the CI to possible failures and the extent to which recovery mechanisms and mitigation measures are installed.
The work in ~\cite{ICIP41} introduced a quantitative assessment for infrastructure's resilience using optimal control design in which recovery processes and costs are integrated to derive the resilience.
However, one key limitation of these studies, ~\cite{ICIP31,ICIP39,ICIP34,ICIP20,ICIP41}, is that they can be used to compare different CIs, yet, they do not capture the effect of specific events on the infrastructure. Therefore, their use is mostly limited to evaluating the resilience of CI but not to improving it.

Other studies in the literature have focused on improving CI resilience by allocating CI-specific physical resources ~\cite{ICIP33,ICIP29,ICIP32,ICIP24}.
In~\cite{ICIP33}, the resilience of a cyber-physical system is improved by allocating a number of inter-network edges to the nodes of the interdependent network connecting the system's cyber and physical layers. The effect of cascading failure among nodes is studied to help in the process of resource allocation.
The authors in~\cite{ICIP29} proposed a new approach to repair system components using a graph-theoretic approach.
In ~\cite{ICIP32} and \cite{ICIP24}, CI resilience is studied from a general perspective without defining a quantitative metric for resilience.
The authors in ~\cite{ICIP32} consider the problem of allocating resources to highway bridges to improve the resilience of a transportation system.
In~\cite{ICIP24}, a framework is proposed to allocate resources to CIs based on their vulnerability level. Contract theory is used to formulate the problem to optimize the economic benefit from the allocated resources which are offered to CIs through contracts managed by the system operator.
Note that, in ~\cite{ICIP34}, beyond defining resilience properties, a framework is proposed to improve CI resilience. The framework depends on allocating resources to improve the resilience by hardening CI's components, duplicating components, or ensuring rapid recovery of failed components.
The framework is applied to improve the resilience of a power grid whose components are the generators and the resources are allocated to the generators.

One limitation of these previous studies ~\cite{ICIP34} and ~\cite{ICIP33,ICIP29,ICIP32,ICIP24}, is that individual CIs are abstracted within the system, e.g. as nodes within a generic graph.
This provides no information on improving individual CIs resilience as the solutions introduced in these studies ~\cite{ICIP34} and ~\cite{ICIP33,ICIP29,ICIP32,ICIP24} consider the resilience of an entire system of multiple CIs while being agnostic to each individual CI's resilience properties.
Indeed, individual CIs and their specific failures are largely abstracted and not considered in enough details. Hence, in such prior art, when resources are allocated within the system, no information is provided on how to effectively allocate them at the level of each CI.

In light of the preceding discussions, we propose a general framework to study and improve the resilience of CIs. The framework addresses \emph{the resilience at the level of both individual CIs and their collective effect on an entire system of multiple CIs}. We introduce an analytical resilience index to quantify the resilience of individual infrastructures and to give insights about improving this resilience. Resilience is evaluated as a function of the CI's probability of failure derived from the cascading failure of its physical components. Resources are then allocated to the individual CIs according to their contribution to the entire system.
Examples of resources here include redundant components or monitoring devices such as sensors or cameras.
Finally, each infrastructure can use the allocated resources to improve its resilience based on the introduced allocation algorithm.  The key contributions stemming from this framework are outlined next.

\subsection{Contributions}
The main contribution of this paper is a comprehensive analytical framework for analyzing and optimizing the resilience of CIs. The proposed framework can be applied to different systems and CIs to evaluate their resilience and optimize it. The framework considers the resilience of each individual CI and allows improving it based on the economic contribution of each CI to the entire system of multiple CIs.
We model the CI performance state using a Markov chain that allows us to derive a novel quantifiable resilience index.
The proposed resilience index relates to the CI's probability of failure which is induced from the probabilistic inference of a Bayesian network modeling the relationships between the various components of a given CI.
The Bayesian network captures the effect of each component's failure on the CI's probability of failure. This allows calculating the effect of fixing each component on the probability of failure and hence on the infrastructure's resilience index. 
We also develop an algorithm, using the Bayesian network, to prioritize each CI's components based on their effect on the resilience index. This algorithm can be used by individual CIs to determine the order in which they should secure their key components through external resources.

A case study pertaining to hydropower dams is introduced to highlight the importance of the proposed framework and to evaluate its performance. Within this case study, a hydropower dam's resilience is evaluated based on its probability of successfully generating electricity. We propose a new approach for improving the dam's resilience by securing its main components using external resources. The problem of allocating these resources to multiple dams, based on their economic contribution to the entire system (the power grid), is modeled using \emph{contract theory}~\cite{CT00} and the optimal solution to this problem is derived using dynamic programming.
Through simulations, we show that both the system operator and individual CIs can benefit from the process of resource allocation. The system operator can maximize its reward from the allocated resources using contract theory, while CIs significantly improve their resilience indices.

\begin{figure}[t]
  \centerline{\includegraphics[width=8cm]{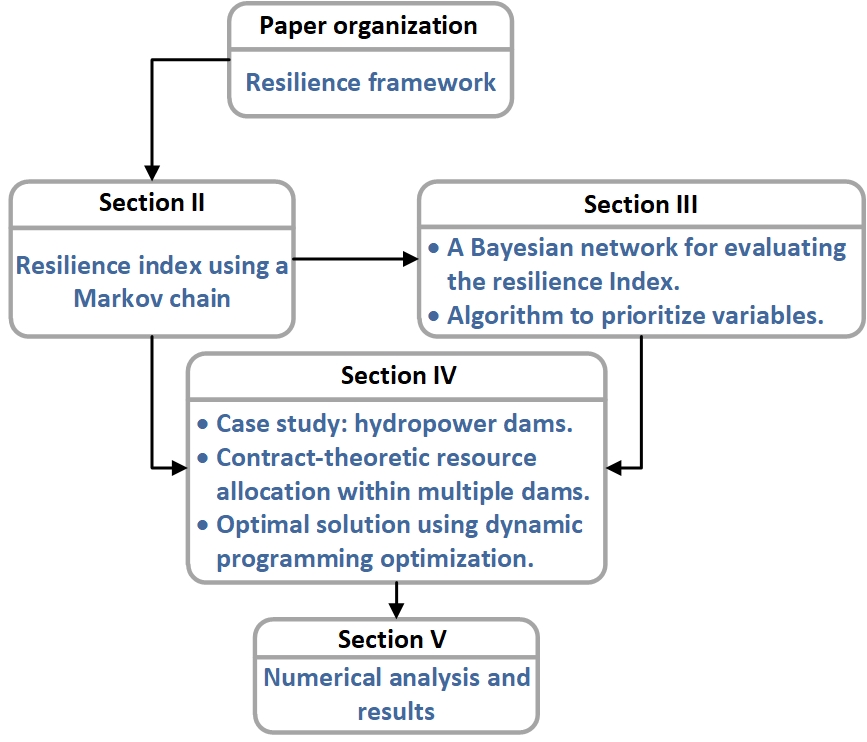}}
  \caption{Organization of the paper.}\label{fig:2}
  \vspace{-.2cm}
\end{figure}

The rest of this paper is organized as summarized in Fig. ~\ref{fig:2}. The Markov chain model and the analytical analysis for deriving the resilience index is presented in Section~\ref{Sec:Model}. The Bayesian network analysis and components' prioritization algorithm is discussed in Section~\ref{Sec:Bayesian}. The case study of hydropower dams and the optimal solution to the problem of CI resource allocation is derived in Section~\ref{Sec:Resources}. Numerical results are presented and analyzed in Section~\ref{Sec:results}. Finally, conclusions are drawn in Section~\ref{Sec:conclusion}.

\section{Evaluating the Resilience of Critical Infrastructure using Markov Chains}\label{Sec:Model}

Consider a critical infrastructure whose performance at a given time $n$ is a function of the state of the system as captured by the random variable $Y_n$. $Y_n$ can take values from a set $\{S,W,F\}$ whose values represent three CI states: success ($S$), warning ($W$), and failure ($F$). The success state, $S$, represents normal service, i.e., the infrastructure is properly delivering its designated service. The CI will be in a warning state, $W$,  with the occurrence of a partial failure to its components that may lead to a complete failure. The failure state, $F$, represents the failure of the CI to deliver its designated service. This failure can occur either suddenly due to, e.g., natural disasters, or as a result of a partial failure from a previous warning state.
We introduce a Markov chain to model these states as shown in Fig.~\ref{fig:1}.
%It's note worthy that a previous study~\cite{ICIP13}, proposed to model infrastructures' performance with a two-valued random variable for satisfactory and unsatisfactory performance.

\begin{figure}[t]
  \centerline{\includegraphics[width=7cm]{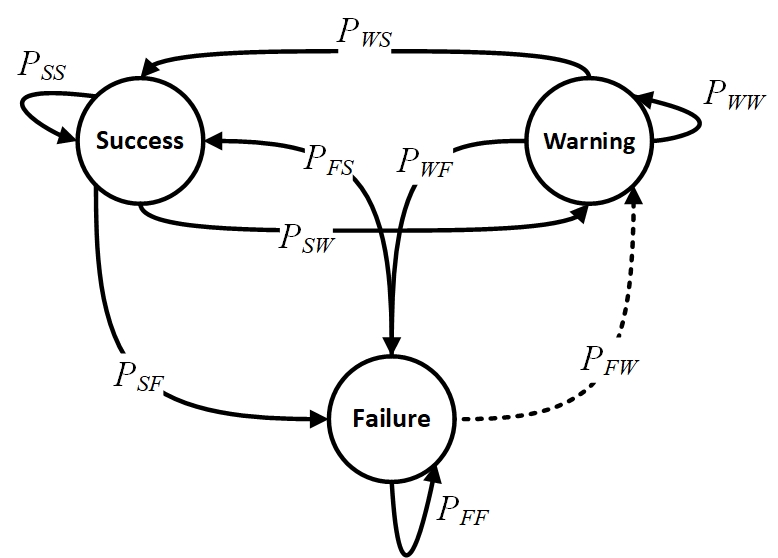}}
  \caption{Markov chain modeling the states of a CI.}\label{fig:1}
    \vspace{-.2cm}
\end{figure}

The transition probabilities between the different states can be induced from the Markov chain. Let $P_{AB}=\textrm{Prob} [Y_{n+1} = B | Y_n = A]$ where $A,B \in \left\{S,W,F \right\}$. Then, the full transition probability matrix $\boldsymbol{P}$ will be given by:
\begin{equation}
%\[
\boldsymbol{P} =
  \begin{bmatrix}
    P_{SS} & P_{SW} & P_{SF} \\
    P_{WS} & P_{WW} & P_{WF} \\
	P_{FS} & P_{FW} & P_{FF}
  \end{bmatrix}.
%\]
\end{equation}

The values within each row of $\boldsymbol{P}$ will sum to $1$ as they represent the probability distribution for all possible next states whenever the system is at a specific state.

In practice, it is intuitive to assume that the probability $P_{FW}$ is zero. This is due to the fact that, once a failure happens, the CI will either remain at this state or will recover to the success state not to a warning state. Here, we also assume that, whenever the infrastructure is at a warning state, it is either fixed and restored to a success state or it continues to fail and eventually goes to a failure state.
This transition is based on the actions taken at a given time step, however, there is still a small probability that no action is taken at this time step as captured by the probability of remaining in the warning state $P_{WW}$. We assume that $P_{WW}$ is fixed to a value $\epsilon$ which should be small. Based on these assumptions, the transition probability can be simplified, as follows:
\begin{equation}
%\[
\boldsymbol{P} =
  \begin{bmatrix}
    P_{SS} & P_{SW} & P_{SF} \\
    P_{WS} & \epsilon & 1- \epsilon - P_{WS} \\
	P_{FS} & 0 & 1-P_{FS}
  \end{bmatrix}.
%\]
\end{equation}

Note that, the probability matrix $\boldsymbol{P}$ specifies transition probabilities for a single time step. Transition probabilities for $n$ time steps can be calculated as $\boldsymbol{P}^n$. 
The probabilities of being at a given state i.e., $P(S),P(W)$, and $P(F)$, can be calculated using $\boldsymbol{P}^n$ and the vector of the initial probability distribution of being at each state $\boldsymbol{U}_{0}$, as follows:
\begin{equation}\label{eq:Prob_n}
\boldsymbol{U}_{n} = \boldsymbol{U}_{0} \cdot \boldsymbol{P}^n,
\end{equation}
where $\boldsymbol{U}_{n}=[u^S_n,u^W_n,u^F_n]$ is the probability distribution vector of being at each state after $n$ time steps.

We define the resilience $\gamma$ of an infrastructure to be the reciprocal of the probability of being in the failure state in the long run as:
\begin{equation}
\gamma = \lim_{n \to \infty} \frac{1}{u^F_n},
\end{equation}
where ${u}^F_{n}$ is the probability corresponding to the failure state in the vector $\boldsymbol{U}_n$. This probability will always exist if the Markov chain is irreducible as shown in ~\cite{grinstead2012introduction}, i.e., there is no absorbing state.
In practice, the chain in Fig.~\ref{fig:1} cannot have an absorbing state so it is irreducible. 
The resilience here, is inversely proportional to the probability of failure thus reducing the probability of failure will increase the resilience. Reducing the long run probability of failure implies that the CI will have fewer time steps at the failure state which conforms to the definition in ~\cite{ICIP19}.
Note that, relating the resilience to the probability of failure was introduced in ~\cite{ICIP13} however, in ~\cite{ICIP13}, the CI was allowed to only operate in either a satisfactory or a failure state. 

To calculate the value of $\gamma$, the value of $u^F_n$ needs to be evaluated at high values of $n$ which in turn will depend on $\boldsymbol{P}^n$.
The powers of $\boldsymbol{P}$ for high values of $n$ can be calculated in advance if $\boldsymbol{P}$ is a regular transition matrix ~\cite{grinstead2012introduction}. The powers $\boldsymbol{P}^n$ are shown to converge to a matrix $\boldsymbol{V}$ in which all rows are the same and each row is a strictly positive probability distribution vector ~\cite{grinstead2012introduction} if $\boldsymbol{P}$ is a regular transition matrix. Converging to a constant matrix means any further multiplications of $\boldsymbol{V}$ with $\boldsymbol{P}$ will not change $\boldsymbol{V}$, i.e.,
\begin{equation}\label{eq:converge}
\boldsymbol{V} \cdot \boldsymbol{P} = \boldsymbol{V}.
\end{equation}

As all rows in $\boldsymbol{V}$ are the same, the probability vector $\boldsymbol{U}_{n}$, in (\ref{eq:Prob_n}), will no longer depend on the initial probability distribution $\boldsymbol{U}_{0}$. Multiplying the values of  $\boldsymbol{U}_{0}$, which sum up to $1$, with the constant columns of $\boldsymbol{V}$ will yield the same constant values of $\boldsymbol{V}$. Hence, (\ref{eq:Prob_n}) can then be written as:
\begin{equation}
\boldsymbol{U}_{n} = \boldsymbol{U}_{0} \cdot \boldsymbol{V}= \boldsymbol{V}.
\end{equation}

Let the vector $\boldsymbol{v}$ be the constant row of $\boldsymbol{V}$ which assigns different transition probabilities to all possible states. According to (\ref{eq:converge}), this vector will satisfy the following property:
\begin{equation}\label{regular:eq}
\boldsymbol{v} \cdot \boldsymbol{P} = \boldsymbol{v}.
\end{equation}

As discussed earlier, matrix $\boldsymbol{V}$ only exists if matrix $\boldsymbol{P}$ is a regular Markov matrix. In the following theorem, we prove the necessary conditions that $\boldsymbol{P}$ must satisfy in order to be a regular Markov matrix.

\begin{theorem}\label{theo1}
The probability transition matrix $\boldsymbol{P}$ is a regular Markov matrix.
\end{theorem}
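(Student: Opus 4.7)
The plan is to appeal directly to the definition of a regular Markov matrix, namely that some power $\boldsymbol{P}^n$ has all strictly positive entries, and to show that taking $n=2$ already suffices. The only obstruction to $\boldsymbol{P}$ itself being strictly positive is the single zero entry $P_{FW}=0$, which encodes the modelling assumption that a failed infrastructure cannot transition directly to a warning state. All the remaining entries of $\boldsymbol{P}$ are strictly positive by construction: $P_{WW}=\epsilon>0$ by assumption, $P_{FF}=1-P_{FS}>0$ since $P_{FS}<1$, $P_{WF}=1-\epsilon-P_{WS}>0$ since $\epsilon$ is small and $P_{WS}<1-\epsilon$, and the transitions out of the success state $P_{SS}, P_{SW}, P_{SF}$ are each strictly positive. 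So only the $(F,W)$ entry needs to be ``repaired'' by going to a higher power.

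My first step would therefore be to compute, or more precisely reason about, the entry $(\boldsymbol{P}^2)_{FW}=\sum_{k\in\{S,W,F\}} P_{Fk}P_{kW}$. Since $P_{FW}=0$, two of the three summands vanish, leaving $(\boldsymbol{P}^2)_{FW}=P_{FS}\,P_{SW}$. Both factors are strictly positive, so this entry becomes positive in two steps; intuitively, the chain can reach $W$ from $F$ via the intermediate path $F\to S\to W$.

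Next I would verify that no other entry of $\boldsymbol{P}^2$ collapses to zero. For each remaining pair $(i,j)$ the sum $\sum_k P_{ik}P_{kj}$ contains at least one product of two strictly positive factors, for instance the diagonal term $P_{ii}P_{ij}$ when $i$ itself lies in $\{S,W\}$, and the term $P_{FF}P_{Fj}$ (for $j\ne W$) or $P_{FS}P_{Sj}$ when $i=F$. A compact way to package this is to observe that the only zero of $\boldsymbol{P}$ is at position $(F,W)$, and that squaring the matrix can only turn a zero entry into a positive one (since entries of $\boldsymbol{P}^2$ are sums of nonnegative products that include all one-step contributions).

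With all nine entries of $\boldsymbol{P}^2$ shown to be strictly positive, the definition of regularity is satisfied with $n=2$, proving the theorem. The main (and essentially only) obstacle is the asymmetric nature of the chain caused by $P_{FW}=0$; the key observation that resolves it is that $S$ acts as a bridging state from $F$ to $W$ in two steps, and this is guaranteed because $P_{FS}>0$ and $P_{SW}>0$ both follow from the physical meaning of the model.
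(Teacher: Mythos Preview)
Your argument is correct and is a genuinely different route from the paper's. You verify the definition directly by showing that $\boldsymbol{P}^2$ has all strictly positive entries, exploiting that the single zero at position $(F,W)$ is repaired in two steps via $F\to S\to W$ since $P_{FS}P_{SW}>0$, while every other entry $(\boldsymbol{P}^2)_{ij}$ inherits positivity from a term such as $P_{ii}P_{ij}$ or $P_{FS}P_{Sj}$. The paper instead solves the linear system $\boldsymbol{v}\boldsymbol{P}=\boldsymbol{v}$, $v^S+v^W+v^F=1$ explicitly, obtains closed-form expressions for $v^S,v^W,v^F$, and argues that these are strictly positive provided $P_{SW}>0$ and $P_{FS}>0$; from the existence of a strictly positive stationary vector it then concludes regularity. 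Your approach is more elementary and logically tighter: positivity of the stationary distribution by itself does not rule out periodicity, so the paper's inference implicitly relies on the self-loop $P_{WW}=\epsilon>0$ to supply aperiodicity, a point it does not make explicit. On the other hand, the paper's computation is not wasted effort, since the closed-form expression for $v^F$ in (\ref{eq:Values}) is exactly what is needed afterwards to define the resilience index and to differentiate it with respect to $P_{WS}$; your proof establishes regularity cleanly but does not deliver those formulas.
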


\begin{proof} 
Let $\boldsymbol{v}=[v^S,v^W,v^F]$, then, the values of $v^S,v^W,v^F$ can be computed using (\ref{regular:eq}), as follows:
\begin{align}
v^S &= v^S \cdot P_{SS} + v^W \cdot P_{WS} + v^F \cdot P_{FS}, \nonumber \\ 
v^W &= v^W \cdot P_{SW} + v^W \cdot \epsilon, \nonumber \\
v^F &= v^S \cdot P_{SF} + v^W \cdot P_{WF} + v^F \cdot P_{FF}, \nonumber \\
v^S &+ v^W + v^F = 1. 
\end{align}
The solution of this set of equations gives the values:

\makeatletter
    \def\tagform@#1{\maketag@@@{\normalsize(#1)\@@italiccorr}}
\makeatother

\footnotesize
\begin{align}\label{eq:Values}
v^S &= \frac{(1-\epsilon) \cdot P_{FS}}{P_{FS} \cdot (1-\epsilon+P_{SW}) + (1-\epsilon) \cdot (1-P_{SS})- P_{WS} \cdot P_{SW}}.\nonumber \\
v^W &= \frac{P_{FS} \cdot P_{SW}}{P_{FS} \cdot (1-\epsilon+P_{SW}) + (1-\epsilon) \cdot (1-P_{SS})- P_{WS} \cdot P_{SW}}.\nonumber \\
v^F &= 1 - \frac{P_{FS} \cdot (1-\epsilon+P_{SW})}{P_{FS} \cdot (1-\epsilon+P_{SW}) + (1-\epsilon) \cdot (1-P_{SS})- P_{WS} \cdot P_{SW}}.\nonumber\\
&
\end{align}
\normalsize
Substituting $P_{WS}$ in the denominator by $1-\epsilon-P_{WF}$, the denominator can be written as $P_{FS} \cdot (1-\epsilon+P_{SW})+P_{SW}\cdot P_{WF}+(1-\epsilon) \cdot P_{SF}$, with all the terms being positive. The numerators can then determine the sign of the values of $v^S,v^W,v^F$. 
It is obvious that $v^S,v^W,v^F$  will have positive values if $P_{SW},P_{FS}$ are nonzero. However, if $P_{SW}$ equals zero, there will be no transition to the warning state when the CI starts at the success state. This implies that the warning state will be isolated which contradicts the fact that the chain is irreducible. The assumption $P_{FS}$ is zero also cannot hold from a practical point of view since in this case the infrastructure cannot be recovered to the success state hence is not resilient.

This shows that the values $v^S,v^W,v^F$ will always be positive hence they represent valid transition probabilities which proves that $\boldsymbol{P}$ is a regular Markov matrix.
\end{proof}

After deriving the convergence values, the resilience of an infrastructure will then be:
\begin{equation} \label{resindex:eq}
\gamma = \lim_{n \to \infty} \frac{1}{u^F_n}=\frac{1}{v^F}.
\end{equation}

To shed some light on the number of time steps, i.e., powers (iterations) needed for the matrix $\boldsymbol{P}$ to converge to $\boldsymbol{V}$, three different CIs are examined as shown in Fig.~\ref{fig:4} and Fig.~\ref{fig:5}.
Fig.~\ref{fig:4} shows the values of $P_S$ when it reaches a constant value after some time steps, similarly, Fig.~\ref{fig:5} shows the values of $P_W$.
In this example, the first CI has high probability of being at a success state, $P_{SS}=0.7$ and high probabilities of returning to the success state, $P_{WS}=P_{FS}=0.7$. Fig.~\ref{fig:4} and Fig.~\ref{fig:5} show that the convergence occurs at $n=3$ time steps. The second CI has a high probability of being at a success state $P_{SS}=0.8$ but lower transition probabilities $P_{WS}=0.2$ and $P_{FS}=0.3$. The convergence in this case occurs at $n=7$ time steps. Finally, the third CI has a low probability $P_{SS}=0.4$ and high transition probabilities $P_{WS}=0.9$ and $P_{FS}=0.7$ and, hence, its convergence occurs approximately at $n=6$ time steps.
Clearly, only a few time steps are needed for each CI's transition matrix to converge which corroborates the practicality of our proposed approach.
\begin{figure}[t]
  \centerline{\includegraphics[width=7cm]{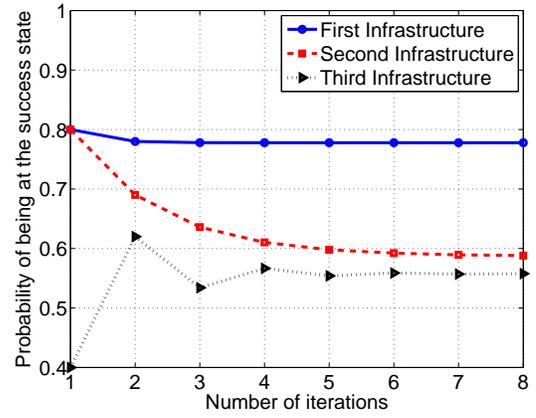}}
  \caption{$P_{S}$ convergence with number of iterations.}\label{fig:4}
%  \vspace{-0.3cm}
\end{figure}

\begin{figure}[t]
  \centerline{\includegraphics[width=7cm]{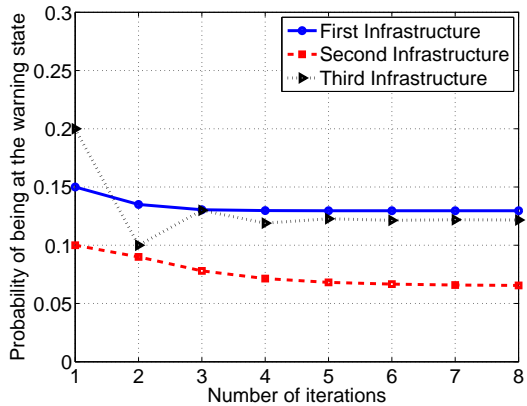}}
  \caption{$P_{W}$ convergence with number of iterations.}\label{fig:5}
   \vspace{-0.4cm}
\end{figure}

In our model, we are interested in studying the effect of improving the probability $P_{WS}$, on the CI's resilience.
This is because, we want to increase the transition probability from warning ($W$) state to the success state ($S$) thus reducing the probability of failure.
To this end, we evaluate the rate of change of the resilience index with respect to the probability $P_{WS}$, as given by:

\makeatletter
    \def\tagform@#1{\maketag@@@{\normalsize(#1)\@@italiccorr}}
\makeatother

\footnotesize
\begin{align}\label{eq:changerate}
&\hspace{0.1cm}\frac{\partial v^F}{\partial P_{SW}} = \nonumber\\ &\frac{- P_{FS} \cdot P_{SW} \cdot  (1-\epsilon+P_{SW})}{\big(P_{FS} \cdot (1-\epsilon+P_{SW}) + (1-\epsilon) \cdot (1-P_{SS})- P_{WS} \cdot P_{SW}\big)^2}.
\end{align}
\normalsize

This rate of change is strictly negative which implies that $v^F$ will always decrease with the increase of $P_{WS}$. From (\ref{eq:Values}) and (\ref{resindex:eq}), it can be clearly seen that the resilience will have a positive rate of change with respect to the probability $P_{WS}$.

Finally, we define the \emph{resilience index} $\theta$ of a CI as:

\vspace{-0.2cm}
\begin{equation}\label{eq:resIndexFinal}
\theta = \frac{\gamma}{\gamma_{\textrm{max}}}=\frac{v^F_{\textrm{min}}}{v^F},
\end{equation}
where $v^F_{\textrm{min}}$ is the minimum value of $v^F$ that can be achieved at the maximum value of $P_{WS}=1-\epsilon$ when substituted into (\ref{eq:Values}) and (\ref{resindex:eq}).
This value achieves a maximum resilience $\gamma_{\textrm{max}}$.
It is straightforward to show that $\theta$ is positive with $\theta \leq 1$.
The resilience index in this way helps to evaluate how far each CI is from its maximum achievable resilience. It can also help to compare different CIs as their resilience is measured on the same scale.
Fig.~\ref{fig:6} shows the values of the resilience index with the increasing values of $P_{WS}$ for different $\epsilon$ values when $P_{SS}=0.8,P_{FS}=0.5$.

\begin{figure}[t]
  \centerline{\includegraphics[width=7cm]{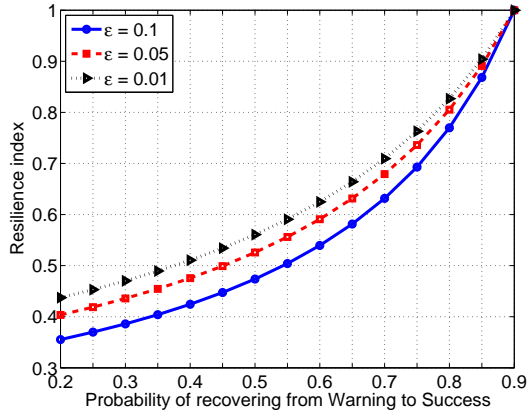}}
  \caption{Resilience index change with the probability $P_{WS}$.}\label{fig:6}
 \vspace{-0.4cm}
\end{figure}

Next, we study how to compute the probability $P_{WS}$ for an infrastructure and the effect of improving this probability on the resilience of a given CI. A Bayesian network is defined for this purpose as explained next. 

\section{Bayesian Network Model for CI Probability of Failure }\label{Sec:Bayesian}
	
To compute $P_{WS}$, we need to evaluate the probability of failure of a CI, given the probability of failure of each of its individual components. Since the failure of one or more components can cause other components to fail, we need to consider the relationship between the components when computing $P_{WS}$. To this end, a Bayesian network ~\cite{ICIP17} is a suitable framework. 

\subsection{Bayesian Networks: Preliminaries}

A Bayesian network is a network that describes the causality and relationship between independent random variables under incomplete information ~\cite{ICIP17}. A Bayesian network is normally represented by a directed acyclic graph (DAG) in which each node represents one random variable. Let $G(\mathcal{X},\mathcal{E})$ be a Bayesian network, then $\mathcal{X}=\{X_1,\dots,X_n\}$ is the set of nodes which represent different random variables and $\mathcal{E}$ is a set of directed edges. A directed edge from a node $X_j$ to $X_i$ means that node $X_i$ depends on the node $X_j$ and in this case $X_j$ is called the parent of node $X_i$. A node can have multiple parents and the set of parent for a node $X_i$ is given by $\pi(X_i)$. Every variable $X_i$ can take a value from a finite set of values, e.g., if the variables are binary they can either be true or false. Finally each node is associated with a conditional probability table (CPT) while roots, i.e, nodes without parents, are assigned direct probabilities. A CPT for a node $X_i$ gives the conditional probabilities between $X_i$ and every node in $\pi(X_i)$. 

Consider a Bayesian network with binary values, each variable in the network can be either true ($\pazocal{T}$) or false ($\pazocal{F}$) with a given probability. For a variable $C_1$, its $\pazocal{T}$ and $\pazocal{F}$ probabilities, i.e, $P(C_1)=\pazocal{T},P(C_1)=\pazocal{F}$ are written as $P(c_1),P(\bar{c}_1)$ where the lowercase letter indicates a value of the variable. The probabilities within each variable sum to $1$, i.e., $P(c_1)+P(\bar{c}_1)=1$. If a variable, e.g. $D_1$, has two parents $C_1$ and $C_2$, then the CPT of $D_1$ will have eight entries representing the possible combinations of $C_1$ and $C_2$ with the $\pazocal{T}$ and $\pazocal{F}$ values of $D_1$. However, as the $\pazocal{T}$ and $\pazocal{F}$ values for any variable sum to $1$, only half of the CPT entries must be stored, i.e., the $\pazocal{T}$ values of $D_1$.

Once the probabilities and the CPTs are assigned, probabilistic inference can be performed to calculate the probability of any variable given some evidence in the network. Calculating probabilistic inference in general Bayesian networks in known to be NP-Hard ~\cite{ICIP21}, however, Pearl ~\cite{ICIP18} introduced a polynomial time algorithm to perform probabilistic inference in \emph{singly connected Bayesian networks}. A singly connected Bayesian network, also known as a polytree, is a Bayesian network where it has no loops, i.e., there is only one path between any two nodes in its underlying undirected graph.

\subsection{Evaluating CI Probability of Failure}

For our resilience problem, we introduce a Bayesian network to model the possible failure events of a given CI that can prevent it from delivering its designated service.
We model the various possible failure events of the components of a CI which can lead to total CI failure with a given probability.
The total failure probability calculated from this Bayesian network will effectively represent the transition probability $P_{WF}$ as this is the probability with which a partial failure causes a total failure.
On the other hand, failure events, such as natural disasters, that can cause a sudden CI failure, are not captured by this network. They can be modeled in a separate network to calculate the transition probability $P_{SF}$.

The Bayesian network is constructed such that failure events are modeled as variables (nodes) in a hierarchical way. Nodes are grouped into levels where failures in one level can cause failures in the next level. Fig.~\ref{fig:3} shows the structure of the proposed Bayesian network in which the number of nodes and levels vary according to each infrastructure.
Roots in the network, $(C_1,C_2,C_3,D_2,\dots,D_n)$, represent possible failure events to respective CI components. Failure, here, can happen due to external effects or normal wear-and-tear of the components.
The subsequent levels represent the cascading failure to other major components, e.g. $Z_1$ and $Z_2$.
These major failures can, in turn, cause a failure in their next level, with given probabilities, and so on until the whole infrastructure fails. The CI failure is represented by the single leaf in the Bayesian network.

\begin{figure}[t]
  \centerline{\includegraphics[width=8.8cm]{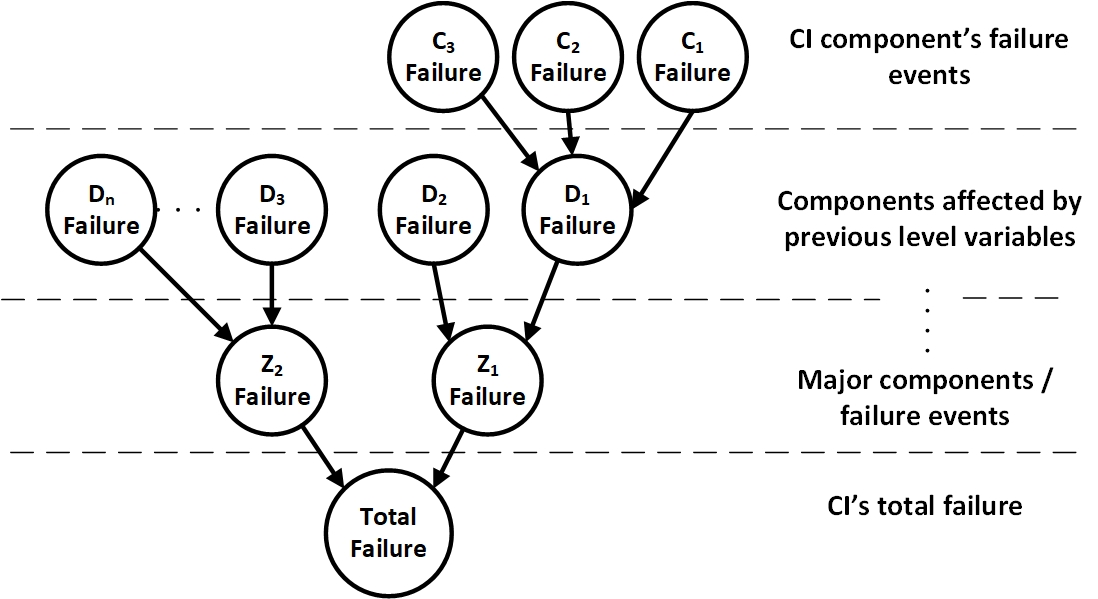}}
  \caption{A Bayesian network representing the hierarchical failure events within a CI's components.}\label{fig:3}
%   \vspace{-0.4cm}
\end{figure}
As the variables represent failure events, they can be either true ($\pazocal{T}$) or false ($\pazocal{F}$) with a given probability. A $\pazocal{T}$ value implies that the failure event has occurred and $\pazocal{F}$ means it has not occurred. Root variables are assigned $\pazocal{T}$ and $\pazocal{F}$ probabilities, while the variables at subsequent levels are associated with conditional probabilities for the possible combinations of their parents' values. Probabilistic inference can then be performed to compute the total probability of failure, of a CI according to the failure probabilities of its components. Note, the proposed network in Fig.~\ref{fig:3} is a singly connected Bayesian network and, hence, probabilistic inference can be performed in polynomial time. 

To compute the probability of failure $P_{WF}$ for any given CI, we start by calculating the \emph{prior marginal probability} of failure. This probability is calculated from the initial assigned probabilities.
Assume, without loss of generality, that $X_n \in \mathcal{X}$ is the variable representing the failure, then the prior marginal probability of $x_n$, the $\pazocal{T}$ value of $X_n$, can be calculated in a manner analogous to ~\cite{ICIP22}:
\begin{equation}\label{eq:prior}
P(x_n) = \sum_{\mathcal{X} \setminus X_n} P(x_1,\dots,x_n),
\end{equation}
where $P(x_1,\dots,x_n)$ is the joint probability for all the instantiations of the independent random variables $X_1,\dots,X_n$. The summation is calculated over all variables except $X_n$, thus these variables are marginalized from the joint probability. The joint probability in (\ref{eq:prior}) is given by:
\begin{equation}\label{eq:joint}
P(x_1,\dots,x_n)=\prod_{i=1}^{n}P(x_i|\pi(x_i)),
\end{equation}
where $P(x_i|\pi(x_i))=P(x_i)$ when the set $\pi(X_i)$ is empty.

This prior marginal probability $P(x_n)$ represents the initial $P_{WF}$ in our CI Markov chain model. It can be used to calculate the initial resilience index of a given CI.
Then, the effect of each node on improving the resilience index can be calculated and, hence, the components of a CI can be prioritized based on their effect on $P_{WF}$. However, the effect of each component should not be considered separately, as securing a component will reduce $P_{WF}$ which will also reduce the effect of other components on $P_{WF}$. The proposed procedure for sorting the components is given next.

We calculate the \emph{posterior marginal probability} of failure given the evidence of each root variable separately. Let the number of root variables be $m<n$. Then, the marginal probability will be given by ~\cite{ICIP22}:
\begin{equation}\label{eq:firstmarginal}
P(x_n|\bar{x_i}) = \sum_{\mathcal{X} \setminus X_n} P(x_1,\dots,x_n|\bar{x_i}), i \in \{1,\dots,m\},
\end{equation}
where $\bar{x_i}$ is the evidence value for the variable $X_i$ when $P(X_i)=\pazocal{F}$.
We consider the false (failure) probability to capture the positive effect of a variable on the total probability of failure.
This is calculated for all root variables and the values are sorted in a descending order.
The variable that causes the greatest reduction in $P_{WF}$ then represents the first component of the CI that must be overhauled.
This variable is also used as a new evidence variable in the Bayesian network to determine the second most affecting variable (component) from the remaining roots.

Assume without loss of generality that $X_1$ is the root with the most effect on $P_{WF}$, then the next posterior marginal probability is calculated considering only the $\bar{x_1}$ instantiation of $X_1$. The probability is calculated for the remaining root variables individually as given by:
\begin{align}\label{eq:secondmarginal}
P(x_n|&\bar{x_1},\bar{x_i}) = \nonumber\\
& \sum_{\mathcal{X} \setminus X_n,X_1} P(x_1,\dots,x_n|\bar{x_1},\bar{x_i}),
 i \in \{2,\dots,m\}.
\end{align}

\begin{algorithm}[t]
\DontPrintSemicolon
  \caption{Variables Sorting According to Their Effect}
 \label{Alg1}
  \KwIn{Bayeian network variables $\mathcal{X}$ where $X_n \in \mathcal{X}$ is the only leaf representing the failure}
   \KwOut{A sorted set of the root variables $\mathcal{S}$}
    \Begin{
	Calculate the prior marginal probability of failure $P(x_n)$\;
	\For{each root node}{
	Calculate the posterior marginal probability $P(x_n|\bar{x_i})$\;
	Calculate the variable's effect $P(x_n)-P(x_n|\bar{x_i})$
	}
	Sort variables descendant according to their effect\;
	Determine the variable $X_i$ with the greatest effect\;
	Store the variable $X_i$ in the set $\mathcal{S}$\;
	Define the partial set $\mathcal{X}_r$ as the set of roots excluding $X_i$\;
	\While{The set $\mathcal{X}_r$ is not empty}{
	\For{each node $X_j \in \mathcal{X}_r$}{
	Calculate the posterior marginal probability $P(x_n|\bar{x_i},\bar{x_j})$\;
	Calculate the variable's effect $P(x_n|\bar{x_i})-P(x_n|\bar{x_i},\bar{x_j})$\;
	}
	Determine the variable $X_j$ with the greatest effect\;
	Store the variable $X_j$ in the set $\mathcal{S}$\;
	Update the set $\mathcal{X}_r = \mathcal{X}_r \setminus X_j$
	}
   \Return{$\mathcal{S}$}
      }
\end{algorithm}
\setlength{\textfloatsep}{0pt}% Remove \textfloatsep

This procedure is applied to all the roots adding one root to the evidence variables each time. The procedure will end by sorting all the components of a CI in a descending order according to their effect on the probability of failure. The steps of this procedure are summarized in Algorithm ~\ref{Alg1}.

Note that, according to (\ref{eq:joint}), the joint probability considers the parents of each node. Thus, (\ref{eq:secondmarginal}) can be derived from (\ref{eq:firstmarginal}) by considering changes in the branch between the leaf node and the new variable only. All the other summations in (\ref{eq:firstmarginal}) will not change as the evidence variable does not belong to this branch. This allows a reduction in the complexity of calculating the updated probabilities after fixing the components. 

Algorithm ~\ref{Alg1} can then be used by any CI to determine the order according to which it must fix its components. As CIs typically allocate resources to improve their resilience~\cite{ICIP34,ICIP33,ICIP29,ICIP32,ICIP24}, Algorithm ~\ref{Alg1} can help CIs to determine the components to which resources will be allocated within each CI. 

Here, we note that, in practice, CIs operate within larger systems (e.g., an entire city) that are composed of multiple, interdependent CIs that collectively provide a common service. As such, the function loss of one CI will impact other interdependent CIs and, therefore, when analyzing the resilience of a large-scale system, one must consider all the interdependent CIs. This, in turn, brings forward a new problem of allocating resources, such as monitoring devices among a system of multiple CIs which is addressed next. Within the context of resource allocation, Algorithm ~\ref{Alg1} is applied by each CI to make the best use of its allocated resources.

\setlength{\textfloatsep}{17pt}

\section{Resource Allocation for Optimized Resilience}\label{Sec:Resources}

As evident from the previous discussion, our next step is to study the problem of allocating resources in a system of multiple CIs,  while taking into account the individual Bayesian network model of each CI. Resources can range from cyber resources to personnel or physical equipment. We classify resources into two categories: preventive and rapid intervention resources. Preventive resources are resources that help CI's  components become less vulnerable to failures. This might include replacing some components with more reliable ones or installing redundant components. Rapid intervention resources, on the other hand, requires monitoring and alarming systems to be deployed and requires the existence of on-site facilities that can be used to fix or replace corrupted components in a timely-manner. The choice of either category of resources depends on the nature of the infrastructure and the cost of using each. For instance, in a power plant, preventive resources can represent installing redundant switches or replacing old stators, while rapid intervention resources can represent excessive monitoring of the generators to repair any defects once they occur to help keep the generator working.

As resources are infrastructure-specific, we introduce an application-specific case study to highlight the importance of our framework. Though the framework can be applied to any CI, studying the problem of resource allocation within the context of a specific CI, as a case study, helps better illustrate our framework, as shown next.
\vspace{-0.1cm}
\subsection{Hydropower dams: A case study}\label{Sec:Dam}

We apply the proposed framework to hydropower dams and their impact on power systems as a practical CI in order to measure the resilience improvement that can be achieved. Dams are classified as one of the critical infrastructure sectors according to the US DHS~\cite{CIP01}. Hydropower dams provide a good platform to apply our proposed framework as they have many connected components that could be affected by numerous failure events.
Recall that, according to our proposed framework, failure will be defined as the inability of the dam to produce electricity.

A Bayesian network is designed for each dam where the parents to the node representing failure are the dam's main components such as penstocks, generators, turbines and transformers. In turn, these variables are modeled as children nodes of the variables representing smaller components such as stators, rotors, intake gates, and blades. Components are connected in a hierarchical manner until the roots that represent small components failure. Fig.~\ref{fig:17} shows a scheme for a hydropower dam highlighting its main physical components along with part of the Bayesian network defined for this dam. We use previous failure statistics and reliability analyses ~\cite{ICIP23} to assign probabilities of failure to the roots of the Bayesian network. Conditional probabilities between components are assigned based on the components' relations similar to method used in fault trees. Note that the same method of assigning probabilities can be applied to any other CI.

\begin{figure}[t]
  \centerline{\includegraphics[width=7.4cm]{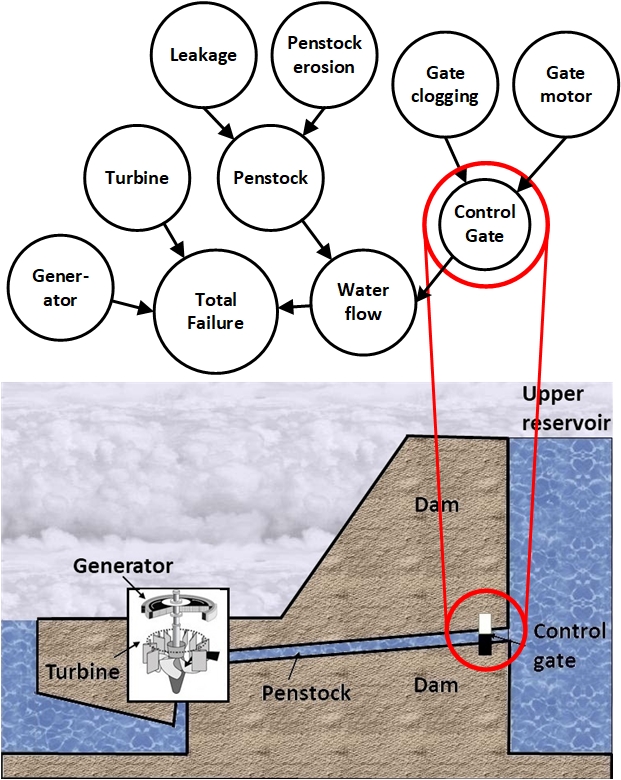}}
  \caption{A Bayesian network model for the hydropower dam in which each node defines the failure probability for one of the physical components all the way to the total CI failure.}\label{fig:17}
   \vspace{-0.3cm}
\end{figure}

From a resources perspective, preventive resources are seen as a long-term solution to improve the resilience of dams in service. Preventive resources require some components to be replaced, which might not be applicable when the dam is in service. Therefore, \emph{we focus on rapid intervention resources} which include monitoring devices, such as sensors and cameras, and maintenance equipment. We propose to use both fixed sensors and drones in the monitoring process. Drones can be used in general to inspect areas of interest in CIs~\cite{ICIP08} and help to inspect hard-to-reach points where conventional sensors/monitoring methods cannot be used. Recently, the use of drones to inspect even the inner parts of the dam was shown to be applicable in~\cite{ICIP09}. In this work, the authors modified a drone and used it to inspect the inside of the pentstocks of a number of dams. Mechanical robots on the other hand can be used to inspect a dam's key sections that cannot be reached by drones such as underwater components~\cite{ICIP11}. 

The majority of dams in the United States, are privately owned ~\cite{ICIP12} and their owners are responsible for their safety. However, there is still a federal role for ensuring dams' safety as dams can severely affect persons and properties in case of failure.
The same applies to electricity supply, the failure of a dam to generate electricity will affect huge parts of the electric grid that it supplies.
These facts reveal the importance of having a system operator to manage the process of resource allocation within multiple dams. The system operator is considered as a centralized agency that provides the resources to a dam, or more, to increase their resilience and hence can avoid long interruptions to the electric service.
Having a system operator that can manage the resources, especially drones, is useful as the operation of drones is regulated by the federal aviation administration (FAA)~\cite{ICIP10} and is not granted to all private organizations. In the following, we will use a general notion of resources, without being restricted to drones, as the framework can be applied to any type of resources.

Next, we formulate the problem of allocating resources within a system of multiple dams (CIs). We propose to use \emph{contract theory}, a powerful framework from microeconomics that provides useful tools for designing contractual agreements between a principal and a number of agents~\cite{CT00}. The system operator is modeled as the principal and the owner of dams as agents, as discussed next in more details.

\subsection{Resource Allocation using Contract Theory}\label{Sec:contract}

We consider a system in which an electric grid operator, referred to as the principal, is interested in providing a number of resources to the owners of dams to be used in the process of surveillance and rapid intervention. Let $\mathcal{N}$ be the set of $N$ targeted dams. Dams are assumed to have different owners. These dams, being part of the grid, sell their generated electricity to the power grid managed by the principal. Each dam can utilize the resources to improve its resilience and hence reduce the probability of failure to generate electricity. 

The principal has a limited number of discrete (integer valued) resources $R$ to be allocated to the dams and, hence, it decides on how to optimally use these resources. The goal of the principal is to invest in improving the probability of power generation and, hence, decreasing the probability of losses due to a dam's failure.
We model the principal's payoff as the difference between the total rewards it gets from the dams' owners and the total expected losses due to each dam's failure.
Losses are modeled as a function of the total probability of failure before and after the deployment of resources.
The total utility $Z_{p}(\boldsymbol{R})$ that the principal achieves as a function of the vector of resource allocation $\boldsymbol{R}$ is given by:
\vspace{-0.1cm}
\begin{equation}\label{eq:PrincipalUtility}
Z_{p}(\boldsymbol{R}) = c \cdot \sum_{i=1}^{N} R_i - \sum_{i=1}^{N}(\alpha_{f_i} - \alpha_{R})\cdot B_i(R_i) \cdot n_i \cdot P_i,
\vspace{-0.1cm}
\end{equation}
where $\boldsymbol{R}$ is the resource allocation vector across all dams with each element $R_i$ specifying the number of resources allocated to dam $i$, $\alpha_{f_i}$ is the expected real-time energy price if dam $i$ fails to generate electricity, $\alpha_{R}$ is the average real-time energy price in normal operation, $P_i$ is the contracted power production for dam $i$, $n_i$ the expected number of hours the dam will be out-of-service due to failure, and $c$ is the monetary reward the principal gets for a unit of resources which can also be seen as a cost. Note that, the resources in (\ref{eq:PrincipalUtility}) refers to monitoring resources or drones as discussed earlier.
We introduce the function $B_i(R_i)$ to measure the improvement in the resilience of a CI $i$ due to the amount of allocated resources. Specifically, the CI evaluates the difference in its resilience index before and after using the resources $R_i$, and $B_i(R_i)$ is given as:
\begin{equation}\label{eq:B-function}
B_i(R_i)= \gamma_{i_{R_i}}^{-1} - \gamma_{i}^{-1} = v^F_{R_i}-v^F,
\vspace{-.05cm}
\end{equation}
where $\gamma_{i_{R_i}}$ and $v^F_{R_i}$ are the values calculated from (\ref{eq:Values}) and (\ref{resindex:eq}) respectively for the updated values of $P_{WS}$.
These updated values are calculated from the Bayesian network as the result of fixing a number of variables equal to $R_i$ according to the order specified by Algorithm \ref{Alg1}. The effect of the first unit of resources on $P_{WS}$, for each dam, is calculated from (\ref{eq:firstmarginal}) while the effect of the remaining resources is calculated from (\ref{eq:secondmarginal}) for each additional unit of resources. Without loss of generality, we assume that each component of a CI can be secured by a single unit of resources.

Each dam's owner will evaluate the amount of resources it receives based on the resilience enhancement that will result from the allocated resources. This resilience improvement is reflected by a higher probability of generating power and, hence, a higher probability to sell the generated power with the real-time prices. 
The utility $Z_{d_i}(R_i)$ of dam $i$ is defined as follows:
\begin{equation}\label{eq:DamUtilitiy}
Z_{d_i}(R_i)= \alpha_{d_i} \cdot B_i(R_i) \cdot n_i \cdot P_i - c \cdot R_i,
\end{equation}
where $\alpha_{d_i}$ is the average day-ahead energy price for dam $i$. The remaining parameters are similar to those in (\ref{eq:PrincipalUtility}).

The principal wants to offer contracts to the dam's owners to maximize its utility in (\ref{eq:PrincipalUtility}). A \emph{contract}~\cite{CT00} can be seen as an agreement between the principal and the dam's owner using which the principal provides and operates resources to monitor, inspect, and fix points of interest in the dam and gets monetary rewards in return. Every contract is defined as a pair $(R_i,c \cdot R_i)$ representing the amount of resources and the monetary reward (cost) the dam's owner should pay for these resources. 

In our model, we assume the principal has complete information about the targeted dams. This information should be provided by each dam's owner as the resource evaluation, from the Bayesian network, is dam-specific and cannot be estimated by the principal without the dam owners. Moreover, the principal, being the system operator, already knows all of the other parameters. Hence, the focus of the principal is to design contracts in a way to ensures each dam's owner participation in order to maximize its total benefit. Contracts offered by the principal should then satisfy the key property of \emph{individual rationality}, under which each dam's owner is interested to participate only if the benefit it gets is greater than or equal to the amount it pays , i.e.,
\begin{equation}
\alpha_{d_i} \cdot B_i(R_i) \cdot n_i \cdot P_i - c \cdot R_i \ge 0.
\end{equation}

The principal can then design the optimal contracts by maximizing  its utility and satisfying the constraints as follows:
\begin{equation}\label{eq:opt}
\max_{R_i}  \hspace{0.2cm} c \cdot \sum_{i=1}^{N} R_i - \sum_{i=1}^{N}(\alpha_{f_i} - \alpha_{R})\cdot B_i(R_i) \cdot n_i \cdot P_i,
\end{equation}
\vspace{-0.5cm}
\begin{align*}
&\textrm{ s.t. } \hspace{0.5cm} 
\alpha_{d_i} \cdot B_i(R_i) \cdot n_i \cdot P_i - c \cdot R_i \ge 0, \ i \in \mathcal{N}, \\  \nonumber
&\hspace{1.1cm} \sum_{i=1}^{N}R_i =  R.\\ \nonumber
\end{align*}
\vspace{-1cm}

\subsection{Optimal Contract}\label{Sec:solution}

Solving the problem in (\ref{eq:opt}) is challenging as the function $B_i(R_i)$ is not continuous. $B_i(R_i)$ has discrete values for a finite set of $R_i$ values. To address this challenge, we first start by inspecting the properties of the function $B_i(R_i)$ in order to solve the problem in (\ref{eq:opt}).

\begin{proposition}\label{lem1}
The values of the function $B_i(R_i)$ represent a monotonically increasing concave sequence.
\end{proposition}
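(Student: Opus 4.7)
The plan is to establish the two asserted properties of the discrete sequence $\{B_i(R_i)\}$ separately, exploiting (i) the strict monotonicity of $v^F$ in $P_{WS}$ obtained in equation~(\ref{eq:changerate}), and (ii) the greedy selection performed by Algorithm~\ref{Alg1}. The underlying observation is that every additional unit of resource fixes one more root of the Bayesian network of Fig.~\ref{fig:3}, reducing the posterior-marginal failure probability by the amount computed via~(\ref{eq:firstmarginal})--(\ref{eq:secondmarginal}) and, because $P_{WS}=1-\epsilon-P_{WF}$, raising $P_{WS}$ by the same positive increment $\Delta P_{WS}^{(R_i+1)}$.

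For the monotone-increase part I would argue directly. Each extra resource strictly raises $P_{WS}$; by~(\ref{eq:changerate}), $v^F_{R_i}$ is strictly monotone in $P_{WS}$; and $B_i(R_i)=v^F_{R_i}-v^F$ is therefore strictly monotone in $R_i$. This piece reduces to single-variable calculus on the closed form obtained in Theorem~\ref{theo1}, and is essentially immediate.

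For concavity the argument has two ingredients. First, by the construction of Algorithm~\ref{Alg1}, the $k$-th selected root is the one yielding the largest remaining marginal drop in $P(x_n\mid\cdot)$, so the sequence $\{\Delta P_{WS}^{(k)}\}_{k\ge 1}$ is non-increasing in $k$. Second, I would translate these non-increasing jumps in $P_{WS}$ into corresponding decrements of $v^F_{R_i}$ by invoking the closed-form expression in~(\ref{eq:Values}): a mean-value argument, together with a bound on $\partial v^F/\partial P_{WS}$ on the relevant subinterval of $[0,1-\epsilon]$, gives that $v^F_{R_i}-v^F_{R_i+1}$ is itself non-increasing in $R_i$. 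This is exactly the concavity of the discrete sequence $\{B_i(R_i)\}$.

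The main obstacle is the first ingredient above: in a general Bayesian network, greedy per-step maximization produces only a locally optimal choice and not necessarily a globally non-increasing sequence of marginal gains, so the proof must exploit the specific hierarchical, singly-connected failure-propagation structure of Fig.~\ref{fig:3} together with the monotone nature of the CPTs to argue that fixing any root can only weaken---never amplify---the marginal failure contribution of any remaining root (a submodularity-type property for failure events). A secondary difficulty is that $v^F$ is not linear in $P_{WS}$: differentiating~(\ref{eq:Values}) a second time yields a nonzero curvature term, so even once non-increasing jumps in $P_{WS}$ have been secured, one must verify that the resulting curvature does not flip the sign of the discrete second difference of $B_i$ over the working range. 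I expect these two points to be the technical heart of the proof, while the monotone-increase part is essentially immediate.
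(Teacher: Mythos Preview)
Your approach is essentially the paper's: the paper, too, argues that Algorithm~\ref{Alg1}'s greedy selection produces non-increasing marginal improvements $a_{i-1}\ge a_i\ge a_{i+1}$ in $P_{WF}$, and then pushes this through the monotone dependence of $v^F$ on $P_{WS}$ (via~(\ref{eq:changerate})) to obtain an increasing concave $B_i$-sequence. The difference is one of rigor rather than strategy: the paper simply asserts both the monotonicity \emph{and} the second-difference inequality $a_{i-1}-a_i\ge a_i-a_{i+1}$ directly from the greedy rule, then hand-waves the transformation by writing ``$b_i=f(1/a_i)$'' without examining curvature---whereas you correctly single out the submodularity of the failure-reduction function and the nonzero second derivative of $v^F$ in $P_{WS}$ as the two places where real work is required. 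In short, your plan is the paper's plan, with the gaps named rather than glossed over.
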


\begin{proof} We prove this proposition by showing how the values of $B_i(R_i)$ are calculated. Let the values $a_{i-1},a_i,a_{i+1}$ be any three consecutive values for the improvement in $P_{WF}$ achieved by fixing any three consecutive variables as calculated from Algorithm \ref{Alg1}. These values satisfy the following two properties:
\begin{align}
a_{i-1} \ge a_i \ge a_{i+1},\nonumber\\
a_{i-1} - a_i \ge a_i - a_{i+1},
\end{align}
according to the selection criteria defined in Algorithm \ref{Alg1} in which the biggest improvement is captured first.

Let $b_{i-1},b_i,b_{i+1}$ be the values calculated from (\ref{eq:B-function}) for the updated $P_{WS}$ values for $a_{i-1},a_i,a_{i+1}$, respectively. Each $b_i$ is the difference between the updated $v^F_{R_i}$ and the current $v^F$.
According to (\ref{eq:changerate}), the values of $v^F_{R_i}$ are inversely proportional to the $P_{WS}$ values, hence, the values $b_{i-1},b_i,b_{i+1}$ follow the same relation and it can be seen that $b_i=f(\frac{1}{a_i})$. Therefore, we can conclude that the sequence is monotonically increasing:
\begin{align}
b_{i-1} \le b_i \le b_{i+1},
\end{align}
and the difference relation becomes:
\begin{align}
b_{i+1} - b_i \le b_i - b_{i-1}.
\end{align}
Rewriting the last inequality we get:
\begin{align}
b_{i+1} + b_{i-1} \le 2 \cdot b_i,
\end{align}
which proves that the sequence is concave ~\cite{ICIP25}. 
\end{proof}

Using Proposition \ref{lem1} with the first constraint in problem (\ref{eq:opt}), we can see that the constraint is a difference between a monotonically increasing concave sequence $\alpha_{d_i} \cdot B_i(R_i) \cdot n_i \cdot P_i$ and a strictly increasing linear sequence $c \cdot R_i$.
The result will be a concave sequence that can have both positive and negative values depending on the difference between the two sequences.
This result is used by the principal to determine the range of values, i.e. $\left[R_{i_{\textrm{min}}},R_{i_{\textrm{max}}}\right]$, that meets the first constraint and, hence, will be acceptable for the dam's owners as it satisfied individual rationality.

Next, we study the properties of the objective function in (\ref{eq:opt}) based on the results of the previous proposition.

\begin{lemma}\label{lem2}
The objective function in (\ref{eq:opt}) is a convex sequence with respect to each dam that is monotonically increasing.
\end{lemma}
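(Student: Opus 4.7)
The plan is to fix all $R_j$ for $j \ne i$ and view the principal's utility as a single-variable sequence in $R_i$. Dropping the terms that do not depend on $R_i$, the relevant piece of the objective reads $f_i(R_i) = c R_i - (\alpha_{f_i} - \alpha_R)\, n_i\, P_i\, B_i(R_i)$, so the lemma reduces to showing that $f_i$ is both a convex sequence and monotonically increasing over the IR-feasible integers $R_i \in [R_{i,\min}, R_{i,\max}]$ that were identified just before the statement.

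For convexity, I would compute the discrete second difference directly. The linear part $c R_i$ contributes zero, leaving
\[
f_i(R_i+1) - 2 f_i(R_i) + f_i(R_i-1) = -(\alpha_{f_i} - \alpha_R)\, n_i\, P_i \bigl[B_i(R_i+1) - 2 B_i(R_i) + B_i(R_i-1)\bigr].
\]
Under the natural price ordering $\alpha_{f_i} \ge \alpha_R$ (failure-time real-time prices exceed baseline), the leading coefficient is non-negative, while the bracketed quantity is non-positive by the concavity of $B_i$ established in Proposition~\ref{lem1}. The right-hand side is therefore non-negative and $f_i$ is a convex sequence.

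For monotonicity, the first difference evaluates to $c - (\alpha_{f_i} - \alpha_R)\, n_i\, P_i\, [B_i(R_i+1) - B_i(R_i)]$, and Proposition~\ref{lem1} guarantees that the marginal gains $B_i(R_i+1) - B_i(R_i)$ form a non-increasing non-negative sequence. Consequently the first differences of $f_i$ are non-decreasing in $R_i$, so it suffices to verify non-negativity at the left endpoint $R_{i,\min}$.

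I expect this final step to be the main obstacle. The natural route is to invoke the IR constraint $\alpha_{d_i}\, B_i(R_i)\, n_i\, P_i \ge c\, R_i$, which at $R_{i,\min}$ forces the slope of the concave dam-side gain to dominate the linear cost $c$, together with the fact that the per-unit operator loss $(\alpha_{f_i} - \alpha_R)$ is naturally bounded by the day-ahead revenue coefficient $\alpha_{d_i}$ seen by the dam. Chaining these two inequalities yields $c \ge (\alpha_{f_i} - \alpha_R)\, n_i\, P_i\, [B_i(R_{i,\min}+1) - B_i(R_{i,\min})]$, which then propagates through all subsequent differences by the non-increasing-increments structure already derived, giving monotonic increase of $f_i$ on the full feasible range and completing the proof.
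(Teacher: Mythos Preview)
Your convexity argument is exactly the paper's: fix $i$, write the per-dam objective as a linear term $cR_i$ minus the positive constant $(\alpha_{f_i}-\alpha_R)n_iP_i$ times the concave sequence $B_i(R_i)$, and conclude convexity of the difference. The paper phrases this in one sentence rather than via an explicit second difference, but the content is identical.

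Where you diverge is monotonicity. The paper's proof does \emph{not} supply a separate argument here; it simply asserts that the difference of a strictly increasing linear term and a monotonically increasing concave term is ``clearly'' monotonically increasing and convex. You are right to be suspicious of the monotone part---a linear-minus-concave expression need not be monotone in general---and you go well beyond the paper by trying to prove it on the IR-feasible range. However, the mechanism you propose does not work as stated. The IR constraint $\alpha_{d_i} n_i P_i B_i(R_i)\ge cR_i$ bounds the \emph{level} of the dam's benefit against the linear cost; it gives no direct control on the \emph{marginal} increment $B_i(R_i{+}1)-B_i(R_i)$, so the claimed ``slope domination'' at $R_{i,\min}$ does not follow from IR alone (indeed, the IR-feasible range here begins at $R_i=0$, where the constraint is vacuous). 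The auxiliary inequality $\alpha_{f_i}-\alpha_R\le \alpha_{d_i}$ is also an extra hypothesis not present in the paper, and even granting it the chain to $c \ge (\alpha_{f_i}-\alpha_R)n_iP_i[B_i(R_{i,\min}{+}1)-B_i(R_{i,\min})]$ is not justified.

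In short: your proof reproduces the paper's argument for convexity, and for monotonicity you attempt to fill a gap that the paper itself leaves open; but your proposed fill-in relies on a slope inference that the IR constraint does not provide.
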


\begin{proof} We prove this lemma by showing the relation between the terms of the objective function. For a given dam $i$, the objective function is the difference between the reward $c \cdot R_i$ and a constant number $(\alpha_{f_i} - \alpha_{R}) \cdot n_i \cdot P_i$ multiplied by the concave function $B_i(R_i)$. The reward term represents a linear strictly increasing function in the number of resources $R_i$, while the second term is monotonically increasing concave function. Clearly, the difference between both terms will be a monotonically increasing convex sequence.
\end{proof}

\vspace{-0.5cm}According to Lemma \ref{lem2}, while being convex, the objective function might have negative values until a certain amount of resources is used, that is when the second term is higher than the rewards term. The principal can use this value to update the minimum number of resources, i.e. $R_{i_{\textrm{min}}}$, that should be allocated to each dam to represent a feasible solution to the principal. The update is done based on the larger of the two minimum values calculated in proposition \ref{lem1} and Lemma \ref{lem2}.
  
After determining the range of possible values for each allocation, the principal can use dynamic programming optimization~\cite{ICIP35} techniques to calculate the solution to the problem in (\ref{eq:opt}). In the dynamic programming representation, stages will represent the current allocation of resources for each dam. The state of each stage represents the current value of the objective function. The update from a stage to another, i.e., from an allocation to another, aims to increase the value of the objective function. If no increase can be achieved at one stage, then the current allocation is the optimal. This is shown next.

\begin{theorem}
The optimal resource allocation can be found using dynamic programming by updating the number of resources assigned to each dam at each stage while maximizing the benefits of each allocation. 
\end{theorem}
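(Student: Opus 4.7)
The plan is to cast the problem in (\ref{eq:opt}) as a classic resource-constrained additive separable optimization and then invoke the principle of optimality to justify a stage-by-stage dynamic-programming recursion. First I would note that the objective function in (\ref{eq:opt}) is additively separable across dams: it can be written as $\sum_{i=1}^{N} g_i(R_i)$, where $g_i(R_i)=c\cdot R_i-(\alpha_{f_i}-\alpha_R)\cdot B_i(R_i)\cdot n_i\cdot P_i$. The coupling between dams comes only through the budget equality $\sum_{i=1}^{N}R_i=R$ and the individual rationality constraints, and by Proposition \ref{lem1} and Lemma \ref{lem2} each $R_i$ is restricted to the discrete, bounded and nonempty feasible interval $[R_{i_{\min}},R_{i_{\max}}]$ determined in those results.

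Next I would define the stages and the state. Let stage $i$ correspond to the decision on how many resources to assign to dam $i$, and let the state $r$ at the beginning of stage $i$ denote the number of unallocated resources. Define the value function
\begin{equation}
V_i(r)=\max_{R_i\in[R_{i_{\min}},R_{i_{\max}}],\ R_i\le r}\bigl\{\, g_i(R_i)+V_{i-1}(r-R_i)\,\bigr\},
\end{equation}
with boundary condition $V_0(0)=0$ and $V_0(r)=-\infty$ for $r>0$ (to enforce exhaustion of the budget). The quantity $V_N(R)$ then equals the optimal value of (\ref{eq:opt}). I would state and verify the principle of optimality: because $g_i$ depends only on $R_i$, any optimal allocation restricted to the last $N-i$ dams must itself be optimal for the residual budget $R-\sum_{j\le i}R_j$, so the Bellman recursion above is valid.

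I would then argue termination and correctness. Since the feasible set for each $R_i$ is a finite set of integers, at each stage the maximization inside the Bellman equation is a finite search over at most $R_{i_{\max}}-R_{i_{\min}}+1$ candidate values, so $V_i(r)$ is well defined for every stage $i$ and state $r\in\{0,1,\dots,R\}$. Backward induction on $i$ therefore produces $V_N(R)$ in finite time, and standard traceback recovers a maximizer $(R_1^\star,\dots,R_N^\star)$. The individual rationality constraints are automatically satisfied because each $R_i$ is drawn from $[R_{i_{\min}},R_{i_{\max}}]$, and the budget constraint is enforced by the boundary condition on $V_0$.

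The main obstacle I anticipate is not the recursion itself but the justification that restricting attention to the precomputed ranges $[R_{i_{\min}},R_{i_{\max}}]$ does not discard the optimum. For this I would lean on Lemma \ref{lem2}: since $g_i$ is a monotonically increasing convex sequence that is negative below $R_{i_{\min}}$ and whose maximizer on any bounded integer interval therefore lies at an endpoint, the restriction is without loss of optimality. Combining this observation with the separability and the Bellman recursion above completes the argument that dynamic programming delivers the optimal contract menu $(R_i^\star,c\cdot R_i^\star)_{i=1}^N$.
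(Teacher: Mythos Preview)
Your recursion is correct and is in fact the textbook way to handle a separable integer objective under a single budget equality: treat each dam as a stage, carry the residual budget as the state, and apply backward induction. The paper does something different. There the ``stages'' are not dams but successive allocations: the authors initialize greedily by giving the maximum feasible amount $k$ to the dam with the largest $z_{i,k}$, assign the remainder to the next-best dam, and then repeatedly test the swap condition
\[
z_{i,k}-z_{i,k-1} \;<\; z_{j,R-k+1}-z_{j,R-k},
\]
moving one (or more) units from the most-endowed dam to another whenever this increases the total. They then argue that this marginal-swap procedure satisfies Bellman's principle. So the paper's argument is really an incremental-exchange argument exploiting the convexity of each $g_i$ from Lemma~\ref{lem2}, whereas yours is a direct state-space enumeration that does not need convexity at all to be correct. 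Your route is cleaner and more general; the paper's route yields a concrete initial allocation and an explicit update rule that matches the complexity bound $O\bigl(N\cdot(R_{\max}-R_{\min})\bigr)$ they quote.

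One remark on your final paragraph: the restriction to $[R_{i_{\min}},R_{i_{\max}}]$ does not need an optimality argument. The upper bound $R_{i_{\max}}$ and the IR portion of the lower bound are hard feasibility constraints from (\ref{eq:opt}), so they cannot ``discard the optimum''. The only piece that would require justification is the additional lower bound coming from Lemma~\ref{lem2} (the principal's per-dam contribution being nonnegative), and your appeal to monotonicity of $g_i$ there is slightly off: monotone increasing plus $g_i(0)=0$ already forces $g_i\ge 0$ everywhere, so the ``negative below $R_{i_{\min}}$'' scenario cannot arise under Lemma~\ref{lem2} as stated. This does not affect the validity of your DP, but you can drop or simplify that paragraph.
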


\begin{proof}
The values of the objective function are calculated at each value of the resources $R_i$ in the feasible range for each dam $i$, $\left[R_{i_{\textrm{min}}} , R_{i_{\textrm{max}}}\right]$. These values are stored for all dams as a matrix of size $N\cdot R$. Each value $z_{i,k}$ in the matrix is the objective function value evaluated for dam $i$ when it is assigned a number of resources $k$. The values of each row represent a monotonically increasing sequence as shown in Lemma \ref{lem2}.

The first stage in the problem starts by allocating the maximum feasible resources $k$ to the dam $i$ with the highest objective function value, i.e., $R_i = k$ for the dam with $R_{i_{\textrm{max}}}=k$ and $z_{i,k}$ is the largest among all other dams.
As the values for this dam $i$ represent a convex sequence and are monotonically increasing, the principal will not gain more by assigning a lower number of resources to this specific dam.
Since this value of $z_{i,k}$ is the maximum among all dams, this allocation represents the maximum value that the principal can get for this number of resources $k$. The rest of the resources, i.e., $R-k$ are assigned to dam $j$ having the largest $z_{j,{R-k}}$ among all dams.

This allocation represents the optimal solution at the first stage as the principal gets the highest utility for the current resource configuration. The principal then tries to get a higher utility by changing the current allocation scheme. The principal might be able to do so by decreasing the number of resources $k$ assigned to dam $i$ and assigning the difference to another dam $j$ if the following condition holds:
\begin{equation}
z_{i,k}-z_{i,{k-1}} < z_{j,{R-k+1}}-z_{j,{R-k}}  \ ,j=1,\dots,N,  j\neq i,
\end{equation}
where $k$ decreases by one unit of resources each time. The principal compares the utility gains that it can achieve by assigning more resources to another dam. These resources are taken from the dam with most resources. The current allocation ensures that the principal gets the largest utility from the allocated resources, so it is the optimal solution at this stage. Here, if the principal cannot increase its utility by changing a unit of resources, then it will try to change more than one at a time until the condition is satisfied or all the values of resources are checked.

The procedure continues at each stage by assigning less resources to the dam with the most resources if a higher utility can be achieved by allocating these resources to another dam.
This ensures that the principal achieves its maximum utility at each stage.
This procedure by starting at the final allocation and moving backward ensuring the maximum utility is achieved at each stage satisfies the Bellman equation~\cite{ICIP36} which is the necessary optimality condition in dynamic programming, Hence, the procedure achieves the optimal resource allocation.
\end{proof}

The complexity of calculating the optimal resource allocation for the previous dynamic programming problem is $O\big(N \cdot (R_{\textrm{max}} - R_{\textrm{min}})\big)$ where:
\begin{align}
R_{\textrm{max}} = \textrm{max}(R_{i,{\textrm{max}}}), \  i=1,\dots,R,\nonumber\\
R_{\textrm{min}} = \text{min}(R_{i,{\textrm{min}}}), \  i=1,\dots,R,
\end{align}
as for each number of resources in the range $\left[R_{\textrm{max}} - R_{\textrm{min}}\right]$, the program at most compares the utility function $N$ times for each of the $N$ dams.

\begin{figure}[t]
  \centerline{\includegraphics[width=7cm]{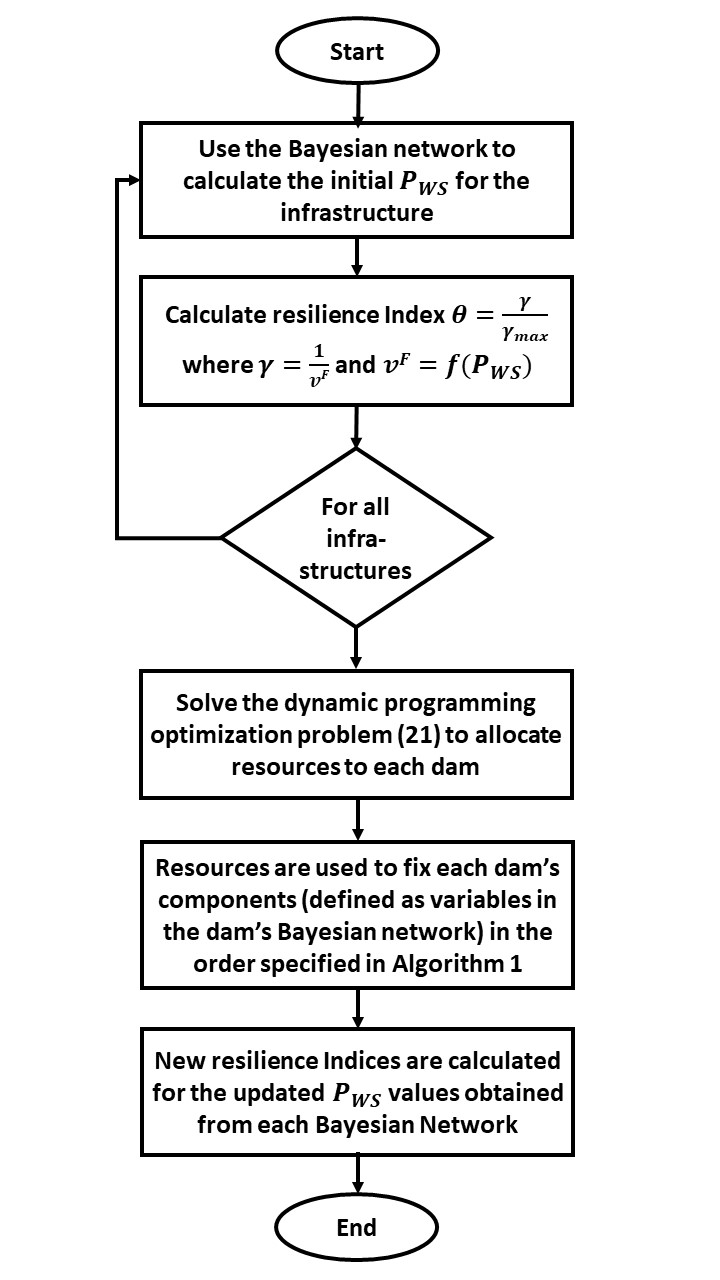}}
  \caption{Flowchart for the proposed mechanism.}\label{fig:7}
  \vspace{-0.3cm}
\end{figure}

Finally, we summarize our framework steps in the flow chart shown in Fig. ~\ref{fig:7}. Note that the problem discussed in this section, though discussed within the context of a case study, can be to used to allocate resources in other systems of multiple CIs.
Selecting a specific CI, hydropower dams here, helped to design meaningful Bayesian networks and to define CIs utilities in the contract-based allocation.
Next, we show some numerical results built on the selected case study, i.e., hydropower dams.

\section{Numerical Analysis and Results}\label{Sec:results}
Although our framework can be applied to any number of dams, for our simulations, we consider a case of two dams, in order to better highlight each dam's effect on the process of resource allocation. Two Bayesian networks are designed for the two dams using similar components but with different probabilities. In the following experiments, the transition probabilities are assumed to be the same for both dams $P_{SS}=0.8,P_{SW}=0.15,P_{FS}=0.5$, and $\epsilon=0.1$. However, the first dam is assumed to have a lower initial $P_{WF}=0.37$ while the second dam will have $P_{WF}=0.7$. Other parameters are set as follows: $\alpha_{d_1}=\$26,\alpha_{d_2}=\$20,P_1=120$~MW/h, $P_2 = 150$~MW/h, $n_1 = 30$~hours, $n_2= 20$~hours, $\alpha_R=\$33,\alpha_{f_1}=\$40$, and $\alpha_{f_2}=\$46$. $\alpha_{f_2}$ is assumed to be higher than $\alpha_{f_1}$ as $P_2$ is assumed to be higher than $P_1$, so the failure of the second dam will have a larger effect on increasing the prices of power. 
%\vspace{-.005cm}

In Fig.~\ref{fig:8}, we show how the total probability of failure $v^F$ can be reduced by overhauling each dam's components. The components are overhauled using the allocated resources in the order specified by Algorithm 1 then the variables representing these components are adjusted in the Bayesian network. Note that, when $v^F$ decreases, both the resilience $\gamma$ and the resilience index $\theta$ increase according to (\ref{resindex:eq}) and (\ref{eq:resIndexFinal}). We can see that the second dam achieves a higher reduction in the probability of failure $v^F$. This because the second dam has a higher initial $P_{WF}$, so the difference between the initial and final $P_{WF}$ is higher resulting in a higher difference in $v^F$. This difference represents the function $B_i(R_i)$ as in (\ref{eq:B-function}). Fig.~\ref{fig:8} also corroborates Proposition ~\ref{lem1} by clearly showing that the improvement in each dam follows a monotonically increasing concave sequence. 

\begin{figure}[t]
  \centerline{\includegraphics[width=7cm]{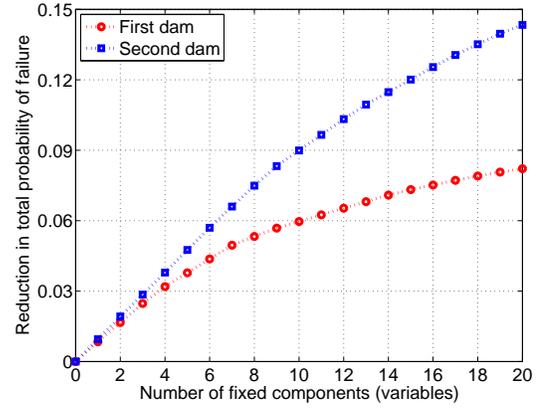}}
  \caption{Dam's probability of failure improvement with the number of overhauled components (their related variables are adjusted in the Bayesian network).}\label{fig:8}
  \vspace{-.2cm}
\end{figure}

Fig.~\ref{fig:9} shows the benefit that each dam receives
from the allocated resources, evaluated as the first term of (\ref{eq:DamUtilitiy}) before subtracting the cost, which is a function of $B_i(R_i)$. The figure shows the cost of the resources separately. The intersection between the cost and each dam's benefit will represent the range of values $\left[R_{i_{\textrm{min}}} , R_{i_{\textrm{max}}}\right]$ that each dam $i$ is willing to accept.
Any additional resource beyond $R_{i_{\textrm{max}}}$ will yield a negative dam's utility as the cost will be higher than the dam's benefit.
The range can be seen from Fig.~\ref{fig:9} to be $\left[0,13\right]$ and $\left[0,15\right]$ for the first and second dams, respectively. The first dam has a smaller range as its utility is lower than the second dam, starting at $12$ units of resources. This utility is lower as the first dam has a smaller power production $P_1$ and a higher initial resilience index that causes the changes in $B_1(R_1)$ to be small.
\begin{figure}[t]
  \centerline{\includegraphics[width=7cm]{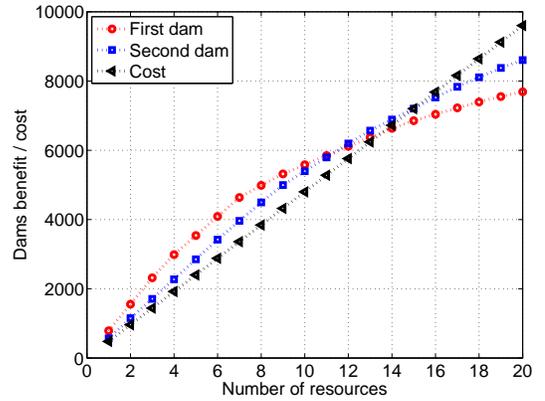}}
  \caption{Dam's benefit and resources cost. The intersection represents the range of resources each dam is willing to accept.}\label{fig:9}
    \vspace{-.29cm}
\end{figure}

In Fig.~\ref{fig:10}, we show the utilities of the dams as given by (\ref{eq:DamUtilitiy}). We can see that both dams have nonnegative utilities only in the ranges discussed before, i.e., $\left[0,13\right]$ for the first dam and $\left[0,15\right]$ for the second dam. Fig.~\ref{fig:10} also shows that both utilities are concave and each has a maximum value at a certain amount of resources. The first dam has its maximum utility when it uses $7$ units of resources, while the second dam can achieve its maximum at $9$ units of resources. These values represent the maximum distance between the benefit and the cost in Fig.~\ref{fig:9}. Note that, according to the proposed framework, the owners of the dams are willing to accept resources in their feasible ranges regardless of their maximum utility. This is because the extra resources will help improve their resilience.
%\vspace{-.08cm}

\begin{figure}[t]
  \centerline{\includegraphics[width=7cm]{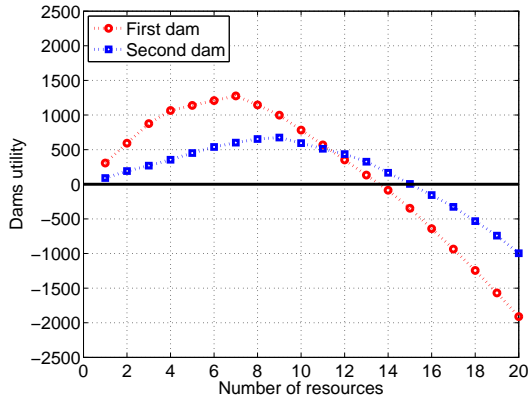}}
  \caption{Dam's utilities relation with the number of allocated resources.}\label{fig:10}
    \vspace{-.4cm}
\end{figure}

In Fig.~\ref{fig:11}, we show the principal's utility calculated for each dam. Fig.~\ref{fig:11} shows two curves: one for each dam where the second curve is plotted upside down. The horizontal axes show the amount of resources allocated to each dam, while the remaining resources are allocated to the other dam. Therefore, the principal's total utility, at each allocation, is the summation of the values from the two curves corresponding to this allocation. Fig.~\ref{fig:11} corroborates the result of Lemma ~\ref{lem2} where we showed that the principal's utility calculated for each dam separately represents a monotonically increasing convex sequence. From Fig.~\ref{fig:11}, we can see that the principal achieves a higher utility by allocating resources to the first dam. This stems from the fact that the first dam has a lower power production $P_1$ and a higher initial resilience, i.e., lower $B_1(R_1)$.
Fig.~\ref{fig:11} also has two solid vertical lines, each of which representing the maximum number of resources $R_{i_{\textrm{max}}}$ for a dam. Any allocation of resources beyond these lines will no longer be feasible as it yields negative dams' utilities. The lines correspond to the maximum resources in the feasible range for each dam ($13$ for the first dam and $15$ for the second dam). The optimal allocation in this case is to allocate $13$ units of resources to the first dam and $7$ units of resources to the second dam.

\begin{figure}[t]
  \centerline{\includegraphics[width=7.2cm]{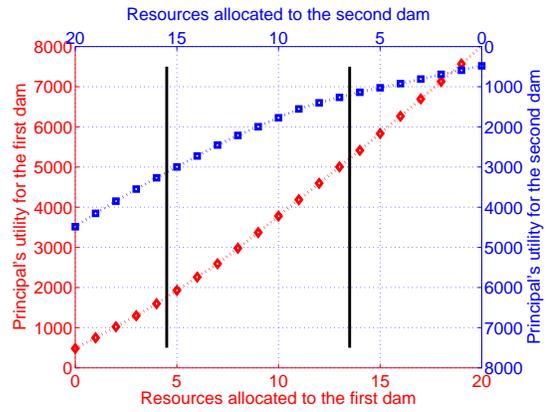}}
    \vspace*{.1cm}
  \caption{The principal's utility for each individual dam with respect to the number of the allocated resources to this dam.}\label{fig:11}
  \vspace{-0.2cm}
\end{figure}

%Fig.~\ref{fig:12} shows the principal's total utility as the summation of the two curves in Fig.~\ref{fig:11}. The horizontal axis shows the resources allocated to the first dam while the second dam is allocated the remaining resources.
%
%\begin{figure}[t]
  %\centerline{\includegraphics[width=7cm]{3-4.eps}}
  %\caption{Principal's total utility with the number of allocated resources. Resources are shown for the first dam where the remaining resources are assigned to the second dam.}\label{fig:12}
%\vspace{-.2cm}
%\end{figure}

In Fig.~\ref{fig:13}, we show the resilience index as a function of the amount of resource allocated to each dam. The horizontal axis shows the resources allocated to each dam separately. The two curves show the possible resilience index improvements for both dams.
Fig.~\ref{fig:13} shows that the first dam has a higher initial resilience index, however, theoretically, both dams can reach their maximum resilience index.
The values of the resilience indices for both dams, achieved at the optimal resource allocation, are marked with black squares.
From Fig.~\ref{fig:13}, we can see that the first dam's resilience index at the optimal allocation equals $0.85$, while the second dam achieves a resilience index of $0.5$. This is due to the fact that the first dam has a higher initial resilience index and it is allocated more resources. Fig.~\ref{fig:13} shows that the first dam achieves about $70\%$ increase over its initial resilience index, while the second dam achieves about $50\%$ increase over its initial resilience index. This makes the average increase of the resilience index in the system about $60\%$.

\begin{figure}[t]
  \centerline{\includegraphics[width=7cm]{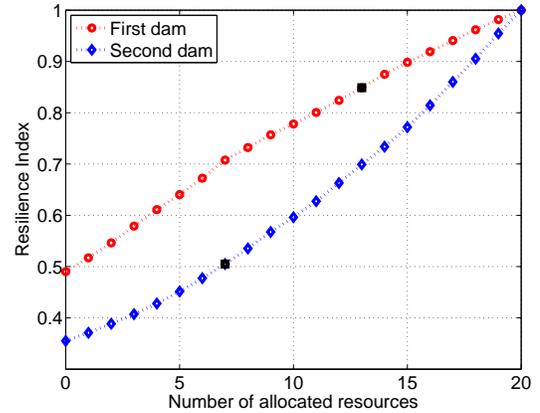}}
  \caption{Resilience Index for each dam as a relation in the amount of allocated resources to each dam.}\label{fig:13}
  \vspace{-.2cm}
\end{figure}

We next study the effect of varying the reward that the principal charges for a unit of resources on the optimal solution. We apply the same parameters as the previous experiments but the reward per resources is now varied from $\$100$ to $\$800$.

Fig.~\ref{fig:14} shows the principal's utility when applying the proposed allocation and its utility when allocating resources to only one of the dams. We see that the principal can achieve its highest utility at the value of $\$700$ per a resources unit. On the other hand, the value of $\$100$ is shown not to be enough for the principal to achieve a positive utility. The values in the range $\left[\$200-\$400\right]$ yield a negative utility for the second dam, therefore the optimal allocation is to allocate the maximum amount to the first dam. In the range $\left[\$400-\$700\right]$, both dams can achieve positive utilities and, hence, the solution involves both dams in the process of resources allocation. At the value of $\$800$, the first dam will achieve negative utility so resources are allocated to the second dam only, i.e., its maximum allowed value. From Fig.~\ref{fig:14}, we can also see that the principal can achieve a higher utility if it allocated all the resources to the first dam for reward values of $\$400$ and $\$500$. This is because the proposed solution is primarily centered around improving the resilience index and not maximizing the principal's reward. Hence, it allocates all of the available resources, as long as the principal achieves a positive utility. From Fig.~\ref{fig:14}, we can see that the proposed solution allocates $18$ and $12$ to the first dam for the reward values of $\$400$ and $\$500$, respectively. The remaining resources, i.e., $2$ and $8$ respectively are allocated to the second dam although they caused the principal's utility to be lower.

\begin{figure}[t]
  \centerline{\includegraphics[width=7cm]{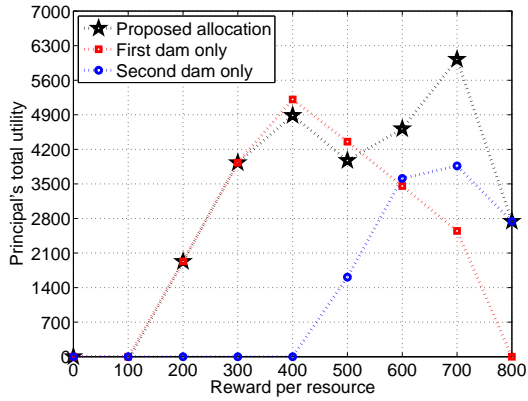}}
  \vspace*{0.1cm}
  \caption{Principal's utility as a relation in the reward charged per unit of resources.}\label{fig:14}
  \vspace{-0.3cm}
\end{figure}

In Fig.~\ref{fig:15}, for comparison purposes, we introduce a slight modification to our dynamic programming procedure to find the optimized solution from the rewards point of view. 
The main difference between the reward-optimized allocation and our original proposed allocation is that the principal does not have to allocate all the available resources. Instead, the principal allocates resources up to the limit that keeps its utility increasing. For instance, at a reward value of $\$400$, the reward-optimized allocation assigns $18$ units of resources to the first dam and nothing to the second dam, compared to $18$ and $2$ in the original proposed allocation. This helps the principal to achieve a higher utility at $\$400$ and $\$500$ as shown in Fig.~\ref{fig:15}. This reward-optimized solution coincides with the first dam's single allocation in Fig.~\ref{fig:14} for the same rewards range.
\begin{figure}[t]
  \centerline{\includegraphics[width=7cm]{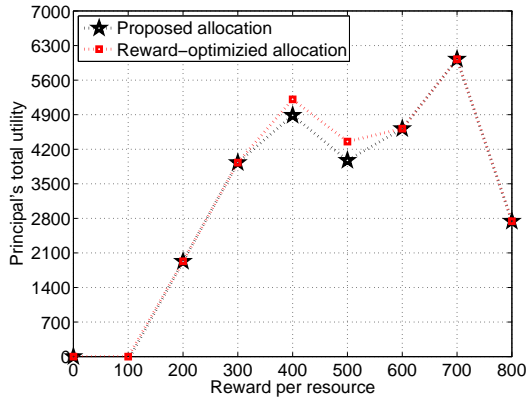}}
  \caption{Principal's utility under the proposed allocation and the introduced reward-optimized allocation.}\label{fig:15}
  \vspace{-.3cm}
\end{figure}

The extra reward achieved using the reward-optimized allocation comes at the cost of the dams' resilience. Fig.~\ref{fig:18} shows the average resilience index for both dams when using the two allocations. We see that at reward values of $\$400$ and $\$500$, the average resilience index of the reward-optimized allocation is $3\%$ and $13\%$ less than our proposed allocation, respectively. This is because less resources are used and, hence, dams can achieve less resilience improvement.
\begin{figure}[t]
  \centerline{\includegraphics[width=7cm]{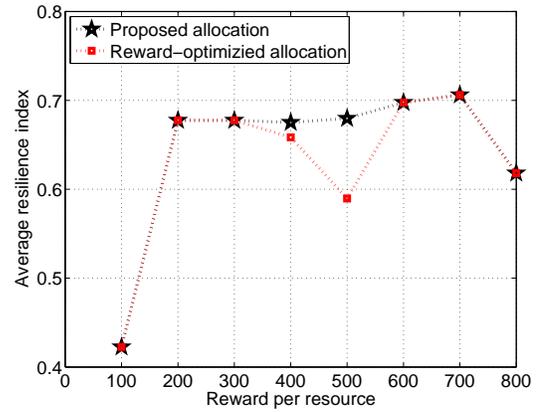}}
  \caption{Average resilience index under the proposed allocation and the introduced reward-optimized allocation.}\label{fig:18}
    \vspace{-.1cm}
\end{figure}

Finally, we show the average resilience index multiplied by the principal's utility to show the combined effect of both, we call this \emph{the average resilience utility} for the principal. Fig.~\ref{fig:16} shows that the gap between the proposed allocation and the reward-optimized allocation is smaller than the case of comparing just the utilities. This happens as the proposed mechanism allocates all the available resources which helps increase dams' resilience indices and hence the average. In the reward-optimized allocation, some resources are not allocated if they will cause the principal's utility to go lower, hence, dams achieve lower resilience indices and the average will be lower. From Fig.~\ref{fig:16}, we can see that the reward-optimized allocation slightly outperforms the proposed-allocation in the average resilience utility only at the value of $\$400$. It is slightly lower at the value of $\$500$ and coincides with the proposed allocation at all the other values. It is also clear from Fig.~\ref{fig:16} that our proposed allocation outperforms allocating resources to only one of the dams in terms of the combined effect of principal's reward and dams' resilience.

\begin{figure}[t]
  \centerline{\includegraphics[width=7cm]{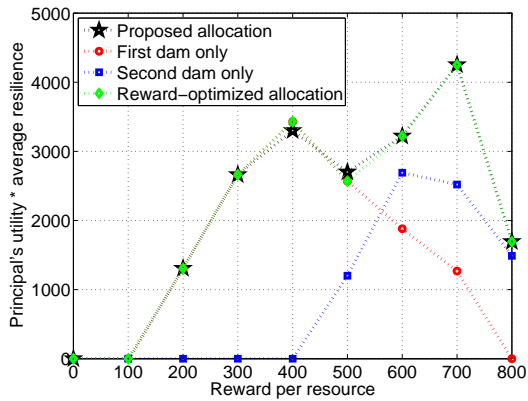}}
  \caption{The average resilience utility for the principal with the rewards value.}\label{fig:16}
    \vspace{-.6cm}
\end{figure}

Here, we note that finding the solution of the reward-optimized allocation increases the complexity of the dynamic programming optimization problem to $O\big(N \cdot (R_{\textrm{max}} - R_{\textrm{min}})^N\big)$ as the principal needs to check all partial resource allocations. This significantly increases the solution space and the time needed to reach the optimal solution. Moreover, this allocation will achieve a lower average improvement in the resilience index as less resources are allocated. Note that, the last constraint in (\ref{eq:opt}) needs to be relaxed to $\sum_{i=1}^{N}R_i \le R$ to allow for partial allocations. Given the complexity of the reward-optimized allocation and the limited improvement it can achieve over our proposed allocation, the proposed allocation will be superior in allocating the resources to a system of multiple CIs.

\section{Conclusion}\label{Sec:conclusion}

In this paper, we have proposed a novel framework to study and optimize the resilience of CIs. A novel resilience index has been introduced that is derived from a Markov chain representing the infrastructure's performance state. The state is defined to be either success, warning, or failure. The framework focuses on the effect of the probability of transition from warning to failure on the resilience index. We have then proposed a Bayesian network to model the infrastructure's physical components and their effect on the resilience index. To prioritize the infrastructure's components in the resilience improvement process, we have introduced a Bayesian network algorithm that captures the effect of each component on the infrastructure's probability of failure.
We have evaluated the proposed framework in a case study of hydropower dams. We have defined a problem of allocating resources to a system of multiple CIs and studied it within the context of the case study. The problem is modeled using contract theory in which a system operator wants to maximize the economic benefit from allocating the resources to CIs. Dynamic programming optimization has been used to derive the optimal solution for the problem of resource allocation. Results have shown that the proposed framework outperforms other allocation methods both in the economic reward for the system operator as well as the average resilience utility.
\vspace{-0.1cm}

%That's all for now:)
\bstctlcite{IEEEexample:BSTcontrol}
\def\baselinestretch{1.00}
\bibliographystyle{IEEEtran}
%\vspace{-0.01cm}
\bibliography{references}
\vspace{-0.5cm}
\end{document}